\newcommand{\B}[1]{\boldsymbol{#1}}
\begin{document}

\title{Quadratic Weighted Histopolation on Tetrahedral Meshes with Probabilistic Degrees of Freedom
}

\titlerunning{Quadratic Weighted Histopolation on Tetrahedral Meshes with Probabilistic Degrees of Freedom}        

\author{ Allal Guessab \and 
        Federico Nudo 
}


\institute{ 
           Allal Guessab \at
            Laboratoire de Mathématiques et de leurs Applications, UMR CNRS 5142, Université de Pau et des Pays de l'Adour (UPPA), 64000 Pau, France  \\
    \email{allal.guessab@univ-pau.fr}
               \and 
            Federico Nudo (corresponding author) \at
              Department of Mathematics and Computer Science, University of Calabria, Rende (CS), Italy \\
    \email{federico.nudo@unical.it}
}

\date{Version: September 20, 2025}

\maketitle

\begin{abstract}
In this paper we introduce three complementary three-dimensional weighted quadratic enrichment strategies to improve the accuracy of local histopolation on tetrahedral meshes. The first combines face and interior weighted moments (face–volume strategy), the second uses only volumetric quadratic moments (purely volumetric strategy), and the third enriches the quadratic space through edge-supported probabilistic moments (edge–face strategy). All constructions are based on integral functionals defined by suitable probability densities and orthogonal polynomials within quadratic trial spaces. We provide a comprehensive analysis that establishes unisolvence and derives necessary and sufficient conditions on the densities to guarantee well-posedness. Representative density families—including two-parameter symmetric Dirichlet laws and convexly blended volumetric families—are examined in detail, and a general procedure for constructing the associated quadratic basis functions is outlined. For all admissible densities, an adaptive algorithm automatically selects optimal parameters. Extensive numerical experiments confirm that the proposed strategies yield substantial accuracy improvements over the classical linear histopolation scheme.
\end{abstract}

\keywords{Orthogonal polynomials\and Tetrahedral meshes\and Dirichlet densities\and Function reconstruction\and Quadratic weighted histopolation}
\subclass{65D05 \and 65D15}

\section{Introduction}
Reconstructing functions from indirect data is a fundamental challenge in applied mathematics and scientific computing. Applications range from imaging to numerical simulation.
Often direct pointwise measurements of the target function are unavailable, and only averages or integrals over prescribed geometric regions can be accessed.
Classical examples include tomography, where detectors register line integrals of the attenuation
coefficient of a medium, and ultrasound or seismic imaging~\cite{kak2001principles,natterer2001mathematics,palamodov2016reconstruction}.
A natural computational paradigm for such problems is offered by
\emph{histopolation methods}.
Unlike classical interpolation, which matches pointwise samples, histopolation relies on integral data, seeking an approximant whose integrals against suitable test functions or densities reproduce the available information.
This perspective is especially appealing when conservation principles or natural averaging are essential, providing a bridge between approximation theory and weighted reconstruction techniques.
In recent years, local histopolation methods have been extensively investigated;
see, e.g.,~\cite{nudo1,nudo2,canwapaper}.
Formally, a local histopolation method can be described by the triple~\cite{Guessab:2022:SAB}
\begin{equation*}
    \left(S_d, \mathbb{F}_{S_d}, \Sigma_{S_d}\right),
\end{equation*}
where  
\begin{itemize}
  \item $S_d$ is a polytope in $\mathbb{R}^d$, $d \ge 1$;
  \item $\mathbb{F}_{S_d}$ is an $n$-dimensional trial space on $S_d$;
  \item $\Sigma_{S_d}=\left\{\mathcal{L}_j: j=1,\dots,n\right\}$ is a set of linearly independent weighted integral functionals called the \emph{degrees of freedom},
\end{itemize}
such that the triple is \emph{unisolvent}, meaning that any $f \in \mathbb{F}_{S_d}$ with $\mathcal{L}_j(f)=0$, $j=1,\dots,n$, 
is identically zero.
A set of functions $\left\{\varphi_i : i=1,\dots,n\right\}$
forming a basis of $\mathbb{F}_{S_d}$ and satisfying
\[
\mathcal{L}_j\left(\varphi_i\right)
=\delta_{ij}
=\begin{cases}
1, & \text{if } i=j,\\[2mm]
0, & \text{otherwise},
\end{cases}
\]
constitutes the basis associated with the local histopolation method.
The idea of such methods is to partition the computational domain into subregions,
endow each of them with an appropriate approximation space,
and assemble a global reconstruction by patching these local approximations together.
If the resulting approximation is discontinuous across the interfaces of the subdomains,
the histopolation method is said to be \emph{nonconforming}; otherwise it is \emph{conforming}. The classical nonconforming linear histopolation method can be locally represented by
\begin{equation}\label{CRelement}
\mathcal{CH} = \left(T,\mathbb{P}_1(T),\Sigma^{\mathrm{CH}}\right),
\end{equation}
where 
\begin{itemize}
    \item $T$ is a nondegenerate tetrahedron in $\mathbb{R}^3$ with vertices $\B{v}_i$, $i=1,2,3,4$;
    \item $\mathbb{P}_1(T)$ denotes the space of linear polynomials on $T$;
    \item the set of degrees of freedom is 
    \begin{equation*}
    \Sigma^{\mathrm{CH}}=\left\{\mathcal{I}_j^{\mathrm{CH}}:f \in C(T)\mapsto
   \frac{1}{\left|F_j\right|}\int_{F_j} f(\B{x})d\B{x}\in\mathbb{R} \, :\, j=1,2,3,4\right\},
    \end{equation*}
    where $F_j$ denotes the face of $T$ opposite to the vertex $\B{v}_j$.
\end{itemize}
The basis functions associated with $\mathcal{CH}$, expressed in barycentric coordinates, are
\[
\varphi_i^{\mathrm{CH}} = 1 - 3\lambda_i,\quad i=1,2,3,4,
\]
and the corresponding reconstruction operator is
\begin{equation*}
   \pi_1^{\mathrm{CH}} : f \in C(T) \mapsto \displaystyle\sum_{j=1}^{4}\mathcal{I}^{\mathrm{CH}}_{j}(f)\,\varphi^{\mathrm{CH}}_{j} \in \mathbb{P}_1(T).
\end{equation*}
In recent years, histopolation has attracted increasing interest, leading to significant advances in both theory and applications~\cite{BruniErbFekete,bruni2024polynomial,dell2025c}.
Notable developments include global polynomial histopolation–regression frameworks in one and several variables~\cite{bruni2025polynomial} and weighted local histopolation schemes in the bivariate case~\cite{nudo2}, motivated by applications in image reconstruction, signal processing, and the design of structure-preserving numerical methods~\cite{Bosner:2020:AOC,HiemstraJCP}.
Despite these advances, most existing local enriched weighted histopolation methods remain confined to one or two dimensional settings.
While histopolation has been extensively studied in lower dimensions, our focus here is on three-dimensional tetrahedra. Nevertheless, the theoretical framework naturally extends to $d$-dimensional simplices, since all constructions rely only on barycentric coordinates and orthogonality arguments.
This limitation is especially critical for tomography and wave propagation, where accurate three-dimensional reconstructions are essential.

In this work we develop a comprehensive framework for \emph{probabilistic quadratic local weighted histopolation methods on tetrahedra}. We enrich the classical nonconforming linear scheme with weighted integral functionals based on suitable probability densities and orthogonal polynomials, combined with quadratic trial polynomials. Specifically, we introduce three complementary enrichment strategies:
one coupling face and interior weighted moments (face-volume strategy),
one relying exclusively on purely volumetric quadratic moments (purely-volumetric strategy),
and one enriching the quadratic component through edge-supported probabilistic moments (edge-face strategy).
We show how to construct the quadratic basis functions associated with these new weighted enriched histopolation methods.
Special attention is devoted to representative families of probability densities,
including the two-parameter symmetric Dirichlet family for the face-volume case
and convexly blended volumetric families.
For each family we employ an adaptive procedure to select the parameters
that minimize the reconstruction error.

The main contributions of this work can be summarized as follows:
\begin{itemize}
  \item We propose and analyze three complementary quadratic enrichment strategies for local weighted histopolation on tetrahedra:
  \begin{itemize}
      \item  a face-volume strategy combining face and interior weighted moments,
      \item a purely-volumetric strategy based on interior quadratic moments,
      \item an edge-face strategy employing edge-supported probabilistic moments.
  \end{itemize}
  \item We establish a \emph{general unisolvence theorem} and provide necessary and sufficient conditions on the probability densities that guarantee well-posedness of all three enriched schemes.
  \item We present a \emph{constructive procedure} for deriving the quadratic basis functions associated with probabilistic degrees of freedom.
  \item For the face-volume strategy we give a detailed realization within the two-parameter Dirichlet family, proving unisolvence for every $\alpha,\beta>0$,
        and we supply explicit formulas for the associated matrices and their determinants.
  \item For the purely-volumetric strategy we analyze Dirichlet interior densities and their convex blends $\Omega^{(\theta,\gamma)}$, proving unisolvence and symmetric positive definiteness of the quadratic moment matrices for all $\theta\in[0,1]$ and $\gamma>0$.
  \item For the edge-face strategy we formulate suitable Beta-type edge densities, derive explicit unisolvence conditions, and discuss their computational advantages for edge-aligned features.
\end{itemize}

\medskip
The paper is organized as follows. In Section~\ref{sec2} we introduce three 3D quadratic enrichment strategies and establish a general unisolvence theorem together with representative families of probability densities ensuring unisolvence. A general procedure for deriving explicit formulas for the associated quadratic basis functions is also provided. Section~\ref{ex3} analyzes Dirichlet densities: for the face-volume strategy we consider the two-parameter symmetric family with $\alpha,\beta>0$; for the purely-volumetric strategy we examine the Dirichlet interior density and its convex blends, and discuss their symmetric positive definiteness and implications for unisolvence and accuracy; and for the edge-face strategy we study Beta-type edge densities and verify the corresponding unisolvence conditions. In Section~\ref{sec3}, we describe an adaptive algorithm for the optimal selection of the density parameters. Finally, in Section~\ref{sec4}, we present numerical experiments demonstrating the superior accuracy of the proposed quadratic enriched methods compared with the classical linear histopolation scheme.

\section{Quadratic Enriched Weighted Histopolation Method on a Tetrahedron}\label{sec2}
Let $T \subset \mathbb{R}^3$ be a nondegenerate tetrahedron with vertices 
$\B{v}_i$, $i=1,2,3,4$. 
We denote by $\lambda_i$, $i=1,2,3,4$, the barycentric coordinates on $T$, i.e., the unique affine functions such that
\begin{equation}\label{propisp} 
\sum_{i=1}^4 \lambda_i = 1,
\quad \lambda_i\left(\B v_j\right) = \delta_{ij}.
\end{equation}
Let $\mathbb{P}_2(T)=\operatorname{span}\left\{\lambda_l, \lambda_i \lambda_j \,: \, l=1,2,3,4, \, 1\le i < j \le 4\right\} $
be the space of polynomials on $T$ of total degree at most two.
It is well known that $\dim \mathbb{P}_2(T) = \binom{3+2}{2} = 10. $
In the discussion below we will use a well-known result,
which holds for any $d$-simplex $S_d$, $d \in \mathbb{N}$~\cite{szego1975orthogonal}.
\begin{lemma}\label{lem1}
  Let $\alpha_0,\ldots, \alpha_d>-1$. Then the following identity holds
  \begin{equation}\label{idbc}
     \frac{1}{\left|S_d\right|}\int_{S_d}\prod_{i=0}^{d}\lambda_i^{\alpha_i}(\B x) d{\B x}=\frac{d! \prod_{i=0}^{d}\Gamma\left(\alpha_i+1\right) }{\Gamma\left(d+1+\sum_{i=0}^{d}\alpha_i\right)},
  \end{equation}
  where $\left \lvert S_d\right \rvert$ is the volume of $S_d$ and $\Gamma(z)$ is the gamma function~\cite{Abramowitz:1948:HOM}.
\end{lemma}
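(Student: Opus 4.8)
The plan is to reduce the integral over the arbitrary simplex $S_d$ to an integral over the reference simplex, where the statement becomes the classical Dirichlet integral, and then to establish the latter by induction on $d$.

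First I would exploit the affine nature of barycentric coordinates. Taking $\lambda_1,\dots,\lambda_d$ as independent coordinates and writing $\lambda_0 = 1-\sum_{i=1}^d\lambda_i$, the affine map $(\lambda_1,\dots,\lambda_d)\mapsto \B{x}=\B{v}_0+\sum_{i=1}^d\lambda_i(\B{v}_i-\B{v}_0)$ sends the reference simplex $\Delta_d=\{\lambda_i\ge 0,\ \sum_{i=1}^d\lambda_i\le 1\}$ onto $S_d$ and carries the barycentric coordinates of $S_d$ exactly into the functions $\lambda_0,\dots,\lambda_d$ on $\Delta_d$. Its Jacobian is constant, equal in absolute value to $\lvert\det[\B{v}_1-\B{v}_0,\dots,\B{v}_d-\B{v}_0]\rvert=d!\,\lvert S_d\rvert$ (since $\lvert\Delta_d\rvert=1/d!$). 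Hence the prefactor $1/\lvert S_d\rvert$ cancels and a factor $d!$ survives, reducing the claim to
\[
 d!\int_{\Delta_d}\Bigl(1-\sum_{i=1}^d t_i\Bigr)^{\alpha_0}\prod_{i=1}^d t_i^{\alpha_i}\,\de t
 =\frac{d!\,\prod_{i=0}^{d}\Gamma(\alpha_i+1)}{\Gamma\bigl(d+1+\sum_{i=0}^{d}\alpha_i\bigr)},
\]
that is, to evaluating the Dirichlet integral $I_d(\alpha_0,\dots,\alpha_d):=\int_{\Delta_d}\bigl(1-\sum_{i=1}^d t_i\bigr)^{\alpha_0}\prod_{i=1}^d t_i^{\alpha_i}\,\de t$.

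Next I would prove $I_d=\prod_{i=0}^d\Gamma(\alpha_i+1)/\Gamma(d+1+\sum_{i=0}^d\alpha_i)$ by induction on $d$. The base case $d=1$ is precisely the Beta integral $\int_0^1 t^{\alpha_1}(1-t)^{\alpha_0}\,\de t=B(\alpha_1+1,\alpha_0+1)$. For the inductive step I would integrate out the innermost variable $t_d\in[0,\sigma]$ with $\sigma=1-\sum_{i=1}^{d-1}t_i$; the substitution $t_d=\sigma s$ factors this one-dimensional integral as $\sigma^{\alpha_0+\alpha_d+1}\,B(\alpha_d+1,\alpha_0+1)$, leaving a $(d-1)$-dimensional integral of the same Dirichlet type in which the exponent $\alpha_0$ is replaced by $\alpha_0+\alpha_d+1$. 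Invoking the induction hypothesis together with $B(\alpha_d+1,\alpha_0+1)=\Gamma(\alpha_d+1)\Gamma(\alpha_0+1)/\Gamma(\alpha_0+\alpha_d+2)$, the intermediate factor $\Gamma(\alpha_0+\alpha_d+2)$ telescopes away and delivers the claimed closed form.

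Both reductions are elementary, so I expect the only delicate point to be the bookkeeping in the inductive step: one must check that the exponent update $\alpha_0\mapsto\alpha_0+\alpha_d+1$ combines with the Beta factor so that the Gamma functions collapse to the symmetric product $\prod_{i=0}^d\Gamma(\alpha_i+1)$ over $\Gamma(d+1+\sum_i\alpha_i)$. The hypotheses $\alpha_i>-1$ are exactly what ensure convergence of each Beta integral at the endpoints, and this is precisely where they enter. Since the final expression is symmetric in the $\alpha_i$, one could alternatively bypass the induction via the Gamma representation $\prod_{i=0}^d\Gamma(\alpha_i+1)=\int_{(0,\infty)^{d+1}}\prod_i s_i^{\alpha_i}e^{-s_i}\,\de s$ and the change of variables $s_i=r\lambda_i$ with $r=\sum_i s_i$, $\sum_i\lambda_i=1$, whose Jacobian $r^{d}$ separates the radial integral $\int_0^\infty r^{d+\sum_i\alpha_i}e^{-r}\,\de r=\Gamma(d+1+\sum_i\alpha_i)$ from $I_d$, reproducing the identity in a single step.
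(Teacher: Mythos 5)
Your proposal is correct and complete. Note, however, that the paper itself gives no proof of Lemma~\ref{lem1}: it is stated as a well-known result and attributed to the literature on orthogonal polynomials, so there is no in-paper argument to compare against. Your two-stage route --- first pulling the integral back to the reference simplex via the affine parametrization $(\lambda_1,\dots,\lambda_d)\mapsto \B{v}_0+\sum_{i=1}^d\lambda_i(\B{v}_i-\B{v}_0)$, whose constant Jacobian $d!\,\lvert S_d\rvert$ produces exactly the factor $d!$ and cancels the normalization $1/\lvert S_d\rvert$, and then evaluating the resulting Dirichlet integral by induction on $d$ --- is the standard derivation and is sound. The bookkeeping in the inductive step checks out: integrating out $t_d$ yields $\sigma^{\alpha_0+\alpha_d+1}B(\alpha_d+1,\alpha_0+1)$, the updated exponent $\alpha_0+\alpha_d+1>-1$ keeps the induction hypothesis applicable, and the factor $\Gamma(\alpha_0+\alpha_d+2)$ cancels between the Beta factor and the $(d-1)$-dimensional result, leaving the symmetric closed form. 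Your remark that $\alpha_i>-1$ is precisely the integrability condition at the faces is also the right place to invoke the hypothesis, and the alternative one-shot argument via $\prod_{i=0}^d\Gamma(\alpha_i+1)$ and the substitution $s_i=r\lambda_i$ is an equally valid (and arguably cleaner) route. Either version would serve as a self-contained proof of the lemma that the paper omits.
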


\subsection{Face–volume enrichment strategy}
We now introduce a quadratic enriched weighted histopolation method that combines face and interior information. To this end, we define a set of linear functionals on $\mathbb{P}_2(T)$.
\begin{itemize}
    \item For each face $F_j$ opposite to vertex $\B{v}_j$, fix a probability density $\omega_j$ 
        on $F_j$ and choose a nonzero quadratic polynomial
        $q_j$ on $F_j$ such that
  \begin{equation}\label{cond11}
     \left\langle q_j,\phi \right\rangle_{F_j,\omega_j}=\int_{F_j} q_j(\B x) \phi(\B x) \omega_j (\B x) d\B x = 0, 
  \quad \forall \ \text{affine }\phi \text{ on }F_j.
  \end{equation}
 We then define
\begin{eqnarray}\label{Ij}
\mathcal{I}_{j} : p \in \mathbb{P}_2(T) \mapsto \int_{F_j} 
p(\B x)\omega_j(\B x) d\B x \in \mathbb R,
\quad j=1,2,3,4,\\ \label{Lj}
\mathcal{L}_{j} : p \in \mathbb{P}_2(T) \mapsto \left\langle p, q_j\right\rangle_{F_j,\omega_j} \in \mathbb R,
\quad j=1,2,3,4. 
\end{eqnarray}
\item To complete the set of degrees of freedom, choose a symmetric probability
        density $\Omega$ on $T$ and two distinct quadratic polynomials
        $\rho_1,\rho_2 \in \mathbb{P}_2(T)$ that are orthogonal to all affine
        functions on $T$ with respect to the inner product
  \begin{equation}\label{innprd}
      \left\langle f,g\right\rangle_{T,\Omega}=\int_T f(\B x) g(\B x) \Omega(\B x) d\B x,
  \end{equation}
that is  $ \left\langle \rho_i,\phi\right\rangle_{T,\Omega}=0$, for any $\phi\in\mathbb{P}_1(T)$.  Then we define
\begin{equation}\label{Vk}
    \mathcal{V}_{k} : p \in \mathbb{P}_2(T) \mapsto \left\langle \rho_k,p\right\rangle_{T,\Omega} \in \mathbb R,
\quad k=1,2.
\end{equation}
\end{itemize}
We consider the triple
\begin{equation}\label{AFfv}
\mathcal{AF}^{\mathrm{fv}}=\left(T,\mathbb{P}_2(T),\Sigma^{\mathrm{fv}}\right), 
\end{equation}
where $\Sigma^{\mathrm{fv}} = \left\{ \mathcal{I}_j, \mathcal{L}_j, \mathcal{V}_k \, :\,   j=1,2,3,4, \quad k=1,2 \right\}$.  
To prove the unisolvence of this triple we introduce the linear map
\begin{equation}\label{funD}
   \mathcal{D}^{\mathrm{fv}} : p \in \mathbb{S}_2(T) \mapsto \left[\mathcal{L}_1(p),\dots,\mathcal{L}_4(p),\mathcal{V}_1(p),\mathcal{V}_2(p)\right]^{\top}\in\mathbb{R}^6,
\end{equation}
defined on 
\begin{equation}\label{S2}
    \mathbb S_2(T)=\operatorname{span}\left\{\lambda_i\lambda_j\, : \,1\le i<j\le 4\right\},
\end{equation}
with basis
\begin{equation}\label{basisfun}
    \mathrm{B}=\left\{b_1,\dots,b_6\right\}=\left\{\lambda_1\lambda_2, \lambda_1\lambda_3, \lambda_1\lambda_4,
\lambda_2\lambda_3, \lambda_2\lambda_4, \lambda_3\lambda_4\right\}.
\end{equation}
The following lemma is crucial.

\begin{lemma}\label{lem:invert-M}
The triple $ \left(T, \mathbb{P}_1(T), \left\{\mathcal{I}_j\, :\, j=1,2,3,4\right\}\right)$
is unisolvent. 
\end{lemma}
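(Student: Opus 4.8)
The plan is to recast unisolvence as the invertibility of a single $4\times4$ moment matrix and then exploit the barycentric structure of the face functionals. Since $\mathcal I_1,\dots,\mathcal I_4$ are four functionals acting on the four-dimensional space $\mathbb P_1(T)$, unisolvence (in the sense of the Introduction: the only $p$ annihilated by all $\mathcal I_j$ is $p\equiv0$) is equivalent to injectivity of the map $p\mapsto(\mathcal I_1(p),\dots,\mathcal I_4(p))$, hence to nonsingularity of the matrix $M$ with entries $M_{ji}=\mathcal I_j(\lambda_i)=\int_{F_j}\lambda_i(\B x)\,\omega_j(\B x)\,\de\B x$. Here I use $\{\lambda_1,\lambda_2,\lambda_3,\lambda_4\}$ as a basis of $\mathbb P_1(T)$; these are linearly independent because $\lambda_i(\B v_k)=\delta_{ik}$ by \eqref{propisp}. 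So it suffices to show $\det M\neq0$, that is, that $p=\sum_i c_i\lambda_i$ with $\mathcal I_j(p)=0$ for every $j$ forces $c=0$.

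First I would determine the structure of $M$ directly from \eqref{propisp}. Since $\lambda_j$ vanishes identically on the opposite face $F_j$, the diagonal entries are $M_{jj}=\int_{F_j}\lambda_j\,\omega_j\,\de\B x=0$. Since $\sum_i\lambda_i\equiv1$ on $T$ and $\omega_j$ is a probability density, each row sums to one, $\sum_i M_{ji}=\int_{F_j}\omega_j\,\de\B x=1$. For $i\neq j$ the entry $M_{ji}=\int_{F_j}\lambda_i\,\omega_j\,\de\B x$ is the $i$-th barycentric coordinate of the $\omega_j$-weighted centroid of the triangular face $F_j$, hence lies in $[0,1]$ (and is strictly positive for a positive density). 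Thus $M$ is a row-stochastic matrix with zero diagonal, fully encoded by the four weighted face centroids.

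The main obstacle is the nonsingularity of $M$: a zero-diagonal stochastic matrix need not be invertible, so this step must use the geometry of the weighted centroids and not merely the stochastic structure. For the representative symmetric face densities the three off-diagonal entries in each row coincide by symmetry and equal $1/3$ (for the uniform law this value also follows from Lemma~\ref{lem1} with $d=2$), so $M=\tfrac13(J-I)$, where $J$ is the all-ones matrix and $I$ the identity; the eigenvalues of $J$ on $\mathbb R^4$ are $4$ (eigenvector $\mathbf 1$) and $0$ (multiplicity three), whence those of $M$ are $1$ and $-\tfrac13$ and $\det M=-\tfrac1{27}\neq0$, which yields unisolvence. For general admissible densities I would prove $\det M\neq0$ by expanding the determinant in the weighted-centroid coordinates and ruling out the degenerate configuration that makes the columns of $M$ linearly dependent; pinning down this non-degeneracy in terms of the densities $\omega_j$ is the delicate point, and is precisely what the necessary and sufficient conditions of the general unisolvence analysis are designed to control.
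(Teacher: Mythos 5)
Your reduction to the nonsingularity of the $4\times4$ matrix $M_{ji}=\int_{F_j}\lambda_i(\B x)\,\omega_j(\B x)\,\de\B x$ is exactly the dual of the paper's argument. The rows of $M$ are the barycentric coordinates of the $\omega_j$-weighted barycenters $\B\xi_j\in F_j$, which are the one-point Gaussian nodes the paper invokes via $\mathcal I_j(a)=W_j\,a(\B\xi_j)$, $W_j>0$; and $M$ is singular precisely when some nonzero affine function vanishes at all four $\B\xi_j$, i.e.\ when these four points are coplanar. The paper concludes geometrically by asserting that the $\B\xi_j$ are not coplanar, so the plane $\{a=0\}$ cannot contain them all; you conclude algebraically via $\det M\neq0$. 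Your explicit computation $M=\tfrac13(J-I)$, with eigenvalues $1$ and $-\tfrac13$ and $\det M=-\tfrac1{27}$, correctly settles the case of face densities that are invariant under permutation of the three barycentric coordinates of each face, which covers every family the paper actually uses (uniform, symmetric quadratic, symmetric Dirichlet $\omega_j^{(\alpha)}$).

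The gap is the step you explicitly leave open: for general probability densities you never establish $\det M\neq0$, and in fact it cannot be established, because the lemma as stated (with arbitrary $\omega_j$) fails. Your worry about singular zero-diagonal stochastic matrices is not hypothetical: take $\omega_1$ proportional to the restriction of $\lambda_2$ to $F_1$, $\omega_2$ proportional to $\lambda_1$ on $F_2$, $\omega_3$ proportional to $\lambda_4$ on $F_3$, and $\omega_4$ proportional to $\lambda_3$ on $F_4$. Using~\eqref{idbc} the weighted barycenters have barycentric coordinates $\left(0,\tfrac12,\tfrac14,\tfrac14\right)$, $\left(\tfrac12,0,\tfrac14,\tfrac14\right)$, $\left(\tfrac14,\tfrac14,0,\tfrac12\right)$, $\left(\tfrac14,\tfrac14,\tfrac12,0\right)$, all lying on the plane $\lambda_1+\lambda_2=\tfrac12$; the nonzero affine function $a=2\left(\lambda_1+\lambda_2\right)-1$ then satisfies $\mathcal I_j(a)=0$ for $j=1,2,3,4$. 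So the non-coplanarity of the $\B\xi_j$ — equivalently your $\det M\neq0$ — is a genuine hypothesis on the densities, not a consequence of their being probability densities; the paper's own proof asserts it without justification, and it does hold for the symmetric families considered there (each $\B\xi_j$ is then the geometric centroid of $F_j$, and the four face centroids span a tetrahedron). To close your proof you should either restrict to such densities, as your $\tfrac13(J-I)$ computation already does, or add the affine independence of the four weighted barycenters as an explicit assumption.
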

\begin{proof}
Let $a\in\mathbb{P}_1(T)$ be an affine function such that
\begin{equation}\label{sasasasax1}
    \mathcal{I}_j(a)=0,\quad j=1,2,3,4.
\end{equation}
By the one–point Gaussian quadrature (the face–centroid rule), we have
\begin{equation}\label{sasasasax}
\mathcal{I}_j(a)=W_ja\left(\B\xi_j\right), \quad W_j>0,
\end{equation}
where $\B{\xi}_j$ is the quadrature point in the interior of the face $F_j$.  
From~\eqref{sasasasax1} and~\eqref{sasasasax} it follows that $a\left(\B \xi_j\right)=0$, $j=1,2,3,4$. Since the points $\B{\xi}_j$, $j=1,2,3,4$, are not coplanar, the zero set of a nonzero affine function in $\mathbb{R}^3$, which is necessarily a plane, cannot contain them all~\cite{Ciarlet2002TFE}. 
Hence $a= 0$. 
This proves the unisolvence of the triple.
\end{proof}

The following result provides an equivalent characterization of unisolvence for the enriched quadratic triple via a rank condition on its quadratic component.

\begin{theorem}[Unisolvence Equivalence - Rank Condition]\label{thrmimps}
The enriched triple $\mathcal{AF}^{\mathrm{fv}}$ is unisolvent if and only if the linear operator $\mathcal{D}^{\mathrm{fv}}$ defined in~\eqref{funD} satisfies
\begin{equation}\label{funDcond}
\operatorname{rank}\left(\mathcal{D}^{\mathrm{fv}}\right)=6.
\end{equation}
\end{theorem}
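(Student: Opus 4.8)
The plan is to reduce unisolvence of the full $10\times 10$ system to an invertibility statement about a single $6\times 6$ block, exploiting the splitting $\mathbb{P}_2(T)=\mathbb{P}_1(T)\oplus\mathbb{S}_2(T)$. First I would record this direct sum. The six products forming $\mathrm{B}$ in \eqref{basisfun} are linearly independent, and a dimension count ($4+6=10=\dim\mathbb{P}_2(T)$) together with the elementary observation that no nontrivial combination of the $\lambda_i\lambda_j$ is affine gives $\mathbb{P}_1(T)\cap\mathbb{S}_2(T)=\{0\}$. Accordingly I choose a basis of $\mathbb{P}_2(T)$ consisting of any affine basis $\{a_1,\dots,a_4\}$ of $\mathbb{P}_1(T)$ followed by the basis $\mathrm{B}=\{b_1,\dots,b_6\}$ of $\mathbb{S}_2(T)$.

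The central observation is that the functionals $\mathcal{L}_j$ and $\mathcal{V}_k$ annihilate $\mathbb{P}_1(T)$. Indeed, for affine $a$ the restriction $a|_{F_j}$ is affine on the face $F_j$, so the orthogonality property \eqref{cond11} of $q_j$ yields $\mathcal{L}_j(a)=\left\langle a,q_j\right\rangle_{F_j,\omega_j}=0$; likewise $\mathcal{V}_k(a)=\left\langle\rho_k,a\right\rangle_{T,\Omega}=0$ by the defining orthogonality of $\rho_k$ to $\mathbb{P}_1(T)$. Writing the $10\times 10$ collocation matrix $M=\left[\mathcal{F}_i(\psi_\ell)\right]$, with the functionals ordered as $\mathcal{I}_1,\dots,\mathcal{I}_4,\mathcal{L}_1,\dots,\mathcal{L}_4,\mathcal{V}_1,\mathcal{V}_2$ and the basis ordered as above, this shows that $M$ is block lower triangular,
\[
M=\begin{pmatrix} A & \ast\\[1mm] 0 & D\end{pmatrix},
\]
where $A$ is the $4\times 4$ matrix $\left[\mathcal{I}_j(a_\ell)\right]$ of the face averages on the affine basis, and $D$ is the $6\times 6$ matrix of $\{\mathcal{L}_j,\mathcal{V}_k\}$ evaluated on $\mathrm{B}$, which is precisely the matrix representing $\mathcal{D}^{\mathrm{fv}}$ from \eqref{funD}.

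Since the number of functionals equals $\dim\mathbb{P}_2(T)=10$, the triple $\mathcal{AF}^{\mathrm{fv}}$ is unisolvent if and only if $M$ is invertible, i.e. $\det M=\det A\cdot\det D\neq 0$. By Lemma~\ref{lem:invert-M} the affine triple $\left(T,\mathbb{P}_1(T),\{\mathcal{I}_j\}\right)$ is unisolvent, so $A$ is invertible and $\det A\neq 0$. Hence $M$ is invertible if and only if $\det D\neq 0$, that is, if and only if $\mathcal{D}^{\mathrm{fv}}$ has full rank $6$, which is exactly \eqref{funDcond}. This chain of equivalences establishes the theorem.

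I expect the only genuinely delicate points to be (i) justifying the vanishing of the lower-left block, where one must use that an affine function on $T$ restricts to an affine function on each face so that \eqref{cond11} applies, and (ii) the direct-sum decomposition, which underlies the clean block structure. An alternative, equivalent route avoids the matrix altogether: decomposing $p=a+s$ with $a\in\mathbb{P}_1(T)$ and $s\in\mathbb{S}_2(T)$, one notes $\mathcal{L}_j(p)=\mathcal{L}_j(s)$ and $\mathcal{V}_k(p)=\mathcal{V}_k(s)$, so that $\ker\mathcal{D}^{\mathrm{fv}}$ controls the quadratic part; the forward implication then lifts any $s\in\ker\mathcal{D}^{\mathrm{fv}}$ to a full kernel element by choosing, via Lemma~\ref{lem:invert-M}, the unique affine $a$ with $\mathcal{I}_j(a)=-\mathcal{I}_j(s)$, forcing $p=0$ and hence $s=0$. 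I would present the block-triangular version as the main argument, since it yields both directions simultaneously and makes the role of $\det A\neq 0$ transparent.
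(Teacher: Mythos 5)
Your argument is correct, and it rests on exactly the same ingredients as the paper's proof --- the splitting $\mathbb{P}_2(T)=\mathbb{P}_1(T)\oplus\mathbb{S}_2(T)$, the observation that the $\mathcal{L}_j$ and $\mathcal{V}_k$ annihilate $\mathbb{P}_1(T)$ via~\eqref{cond11} and the defining orthogonality of $\rho_1,\rho_2$, and Lemma~\ref{lem:invert-M} for the affine block --- but it packages them differently. The paper argues the two implications separately at the level of polynomials: for sufficiency it decomposes $p=a+q$ and pushes the vanishing conditions down to $\mathcal{D}^{\mathrm{fv}}(q)=0$; for necessity it takes a nonzero $\tilde q\in\ker\mathcal{D}^{\mathrm{fv}}$ and explicitly constructs a nonzero $\tilde p=\tilde a+\tilde q$ annihilated by all ten functionals, using Lemma~\ref{lem:invert-M} to solve $\mathcal{I}_j(\tilde a)=-\mathcal{I}_j(\tilde q)$ --- precisely the ``alternative route'' you sketch at the end. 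Your main argument instead assembles the full $10\times10$ collocation matrix in the adapted basis, notes that the lower-left block vanishes (you call the result block lower triangular, but the displayed matrix $\left(\begin{smallmatrix} A & \ast\\ 0 & D\end{smallmatrix}\right)$ is the upper-triangular form; the factorization $\det M=\det A\cdot\det D$ holds either way), and reads off both directions at once from $\det A\neq 0$. This is slightly more economical, and it is in fact the structure the paper itself exploits immediately afterwards when it writes $\mathcal{H}$ in block form following~\eqref{pi1ab} to compute the basis functions; the paper's polynomial-level converse has the minor advantage of exhibiting an explicit nontrivial element of the joint kernel, which makes the failure of unisolvence concrete. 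I see no gaps in your proposal.
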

\begin{proof}
We first assume that~\eqref{funDcond} holds. Let $p \in \mathbb{P}_2(T)$ satisfy
\begin{eqnarray}\label{vancond}
    \mathcal{I}_j(p)&=&0,  \quad j=1,2,3,4, \\ \label{vancond1} \mathcal{L}_j(p)&=&0,  \quad j=1,2,3,4,\\ \mathcal{V}_k(p)&=&0, \quad k=1,2. \label{vancond2}
\end{eqnarray}
We show that $p= 0$. By construction every $p\in\mathbb{P}_2(T)$ can be uniquely decomposed as
\begin{equation*}
p= a + q, \quad 
 a \in \mathbb{P}_1(T),\quad q \in \mathbb{S}_2(T),     
\end{equation*}  
where $\mathbb{S}_2(T)$ is defined in~\eqref{S2}.
Using \eqref{Lj} and the fact that each $q_j$ is orthogonal to all affine functions on $F_j$,  
conditions~\eqref{vancond1} reduce to
\begin{equation}\label{new}
0=\mathcal{L}_j(p)=\mathcal{L}_j(q), \quad j=1,2,3,4.
\end{equation}
Similarly, since $\rho_1,\rho_2$ are orthogonal to all affine functions on $T$,
conditions~\eqref{vancond2} reduce to
\begin{equation}\label{newnew}
    0=\mathcal{V}_k(p)=\mathcal{V}_k(q), \quad k=1,2.
\end{equation}
Together,~\eqref{new} and~\eqref{newnew} yield $ \mathcal{D}^{\mathrm{fv}}(q)=0$. Since, by assumption, $\mathcal{D}^{\mathrm{fv}}$ has rank $6$, it follows that $q = 0$.
Hence $p=a$ is affine.  
From~\eqref{vancond} and Lemma~\ref{lem:invert-M} it follows that $a= 0$, and therefore $p= 0$.

Conversely, suppose that $\mathcal{AF}^{\mathrm{fv}}$ is unisolvent and assume, to reach a contradiction, that $\operatorname{rank}\left(\mathcal{D}^{\mathrm{fv}}\right)<6.$ Then there exists a nonzero polynomial $\tilde{q}\in\mathbb{S}_2(T)$ such that $\tilde{q}\in \ker\left(\mathcal{D}^{\mathrm{fv}}\right)$.
By Lemma~\ref{lem:invert-M} there is a unique affine function $\tilde{a}$ such that $ \mathcal{I}_j\left(\tilde{a}\right)=-\mathcal{I}_j\left(\tilde{q}\right).$
We set $\tilde{p}=\tilde{a}+\tilde{q}\in\mathbb{P}_2(T)$. 
Then
\[
\mathcal{I}_j\left(\tilde{p}\right)=\mathcal{I}_j\left(\tilde{a}\right)+\mathcal{I}_j\left(\tilde{q}\right)=0, \quad j=1,2,3,4,
\]
and, by construction of the additional functionals, $\mathcal{L}_j\left(\tilde{a}\right)=\mathcal{V}_k\left(\tilde{a}\right)=0$ while, by assumption, $\mathcal{L}_j\left(\tilde{q}\right)=\mathcal{V}_k\left(\tilde{q}\right)=0$. 
Thus
$$\mathcal{L}_j\left(\tilde{p}\right)=\mathcal{V}_k\left(\tilde{p}\right)=0, \quad j=1,2,3,4, \quad k=1,2.$$  
Hence $\tilde{p}\in\mathbb{P}_2(T)$ is nonzero and all degrees of freedom in $\Sigma^{\mathrm{fv}}$ vanish, contradicting unisolvence.
Therefore $\operatorname{rank}\left(\mathcal{D}^{\mathrm{fv}}\right)=6$.
\end{proof}

\begin{remark}
The previous theorem shows that the enriched triple $\mathcal{AF}^{\mathrm{fv}}$
is unisolvent if and only if $\operatorname{rank}\left(\mathcal{D}^{\mathrm{fv}}\right)=6$.
Equivalently, unisolvence of the full set of degrees of freedom $\Sigma^{\mathrm{fv}}$ on $\mathbb{P}_2(T)$ amounts to unisolvence of the reduced set
\[
\left\{\mathcal{L}_j,\mathcal{V}_k : j=1,\dots,4,\; k=1,2\right\}
\]
on the pure quadratic subspace $\mathbb{S}_2(T)$.  Indeed, the face averages $\mathcal{I}_j$ already ensure unisolvence on the affine
part $\mathbb{P}_1(T)$, see Lemma~\ref{lem:invert-M}.
Therefore establishing unisolvence for the full space $\mathbb{P}_2(T)$
reduces to verifying unisolvence of $\left\{\mathcal{L}_j,\mathcal{V}_k\right\}$ on $\mathbb{S}_2(T)$.
\end{remark}

The face-volume strategy is a natural enrichment of classical histopolation, combining surface and volumetric information to better capture both boundary and interior features of the target function. In what follows we present two representative families of admissible probability densities for which the associated quadratic enriched triple is unisolvent.
\begin{example}[Uniform densities]\label{ex1}
We consider the uniform densities
\begin{equation*}
    \omega_j= \frac{1}{\left\lvert F_j\right\rvert}, \quad \Omega=\frac{1}{\left\lvert T\right\rvert}, \quad j=1,2,3,4,
\end{equation*}
where $\left\lvert F_j\right\rvert$ is the area of the face $F_j$ and $\left\lvert T\right\rvert$ is the volume of $T$. On each face $F_j$, let
$\mu_{1,j},$ $\mu_{2,j},$ $\mu_{3,j}$ be the barycentric coordinates associated with its three vertices, and set
\begin{equation*}
q_j=\mu_{1,j}^2+\mu_{2,j}^2+\mu_{3,j}^2-\frac{1}{2},\quad j=1,2,3,4.    
\end{equation*}
Using the moment formula~\eqref{idbc}, a direct computation shows
\begin{equation*}
    \frac{1}{\left\lvert F_j\right\rvert}\int_{F_j} q_j(\B x)d\B x=0,\quad \frac{1}{\left\lvert F_j\right\rvert}
\int_{F_j} q_j(\B x)\mu_{r,j}(\B x)d\B x=0,\quad r=1,2,3,
\end{equation*}
so $q_j$ is orthogonal to all affine functions on $F_j$ with respect to the
uniform surface measure. 

\noindent
Now, we set
\begin{equation*}
    \rho_1 = \lambda_1\lambda_2 + \frac{1}{30}-\frac{1}{6}\left(\lambda_1+\lambda_2\right),\quad
\rho_2= \lambda_1\lambda_3 + \frac{1}{30}-\frac{1}{6}\left(\lambda_1+\lambda_3\right).
\end{equation*}
Again, using~\eqref{idbc} a direct computation shows
\begin{equation*}
    \frac{1}{\left\lvert T\right\rvert}\int_T \rho_k(\B x)d\B x=0,\quad
 \frac{1}{\left\lvert T\right\rvert}\int_T \rho_k(\B x)\lambda_r(\B x)d\B x=0,\quad r=1,2,3,\quad k=1,2,
\end{equation*}
so $\rho_1,\rho_2$ are orthogonal to all affine functions on $T$ with respect to the uniform density $\Omega$.

With the basis $\mathrm{B}$ of $\mathbb{S}_2(T)$ defined in~\eqref{basisfun}, 
the matrix of $\mathcal{D}^{\mathrm{fv}}$ in~\eqref{funD} relative to $\mathrm{B}$ is
\begin{equation*}
\mathcal D^{\mathrm{fv}}=\frac{1}{720}\begin{pmatrix}
0 & 0 & 0 & -2 & -2 & -2\\
0 & -2 & -2 & 0 & 0 & -2\\
-2 & 0 & -2 & 0 & -2 & 0\\
-2 & -2 & 0 & -2 & 0 & 0\\
\frac{22}{35} & -\frac{3}{35} & -\frac{3}{35} & -\frac{3}{35} & -\frac{3}{35} & \frac{2}{35}\\
-\frac{3}{35} & \frac{22}{35} & -\frac{3}{35} & -\frac{3}{35} & \frac{2}{35} & -\frac{3}{35}
\end{pmatrix}.
\end{equation*}
Its determinant is
\begin{equation*}
    \det\left(\mathcal D^{\mathrm{fv}}\right)=\frac{1}{ 2\left(3^2(12)^8(5)^6(7)^2\right)}\neq 0;
\end{equation*}
so $\operatorname{rank}\left(\mathcal D^{\mathrm{fv}}\right)=6$.
\end{example}

\begin{example}[Symmetric quadratic densities]
We consider the symmetric quadratic densities
\begin{equation*}
    \omega_j= \frac{2}{\lvert F_j\rvert}\left(\mu_{1,j}^2+\mu_{2,j}^2+\mu_{3,j}^2\right), 
    \quad 
    \Omega= \frac{5}{2\lvert T\rvert}\left(\lambda_1^2+\lambda_2^2+\lambda_3^2+\lambda_4^2\right), \quad j=1,2,3,4,
\end{equation*}
see Fig.~\ref{fig:densitiesex2}. 
We set
\begin{equation*}
q_j = \mu_{1,j}^2+\mu_{2,j}^2+\mu_{3,j}^2-\frac{8}{15}, \quad j=1,2,3,4.
\end{equation*}
Using the moment formula~\eqref{idbc} with respect to the weighted measure $\omega_j$, we obtain
\begin{equation*}
\int_{F_j} q_j(\B x)\omega_j(\B x)d\B x=0,\quad
\int_{F_j} q_j(\B x)\mu_{r,j}(\B x)\omega_j(\B x)d\B x=0, \quad r=1,2,3.
\end{equation*}
so each $q_j$ is orthogonal to all affine functions on $F_j$ with respect to the symmetric quadratic surface measure.

\noindent
Now, we consider the following polynomials 
\begin{equation*}
    \rho_1 = \lambda_1\lambda_2 + \frac{23}{840} - \frac{3}{20}\left(\lambda_1+\lambda_2\right),\quad
    \rho_2 = \lambda_1\lambda_3 + \frac{23}{840} - \frac{3}{20}\left(\lambda_1+\lambda_3\right).
\end{equation*}
Again, using~\eqref{idbc} one finds
\begin{equation*}
\frac{1}{\lvert T\rvert}\int_T \rho_k(\B x)\Omega(\B x)d\B x=0,\quad
\frac{1}{\lvert T\rvert}\int_T \rho_k(\B x)\lambda_r(\B x)\Omega(\B x)d\B x=0,\quad r=1,2,3,4,\  k=1,2,
\end{equation*}
hence $\rho_1,\rho_2$ are orthogonal to all affine functions on $T$ with respect to $\Omega$.

With the basis $\mathrm{B}$ of $\mathbb{S}_2(T)$ defined in~\eqref{basisfun}, 
the matrix of $\mathcal{D}^{\mathrm{fv}}$ in~\eqref{funD} is
\begin{equation*}
    \mathcal D^{\mathrm{fv}} = \frac{1}{ 2116800}
\begin{bmatrix}
0 & 0 & 0 & -7168 & -7168 & -7168\\
0 & -7168 & -7168 & 0 & 0 & -7168\\
-7168 & 0 & -7168 & 0 & -7168 & 0\\
-7168 & -7168 & 0 & -7168 & 0 & 0\\
2165 & -250 & -250 & -250 & -250 & 135\\
-250 & 2165 & -250 & -250 & 135 & -250
\end{bmatrix},
\end{equation*}
which has a nonzero determinant and therefore $\operatorname{rank}\left(\mathcal D^{\mathrm{fv}}\right)=6.$
\end{example}

\begin{figure}
      \centering
\includegraphics[width=0.49\linewidth]{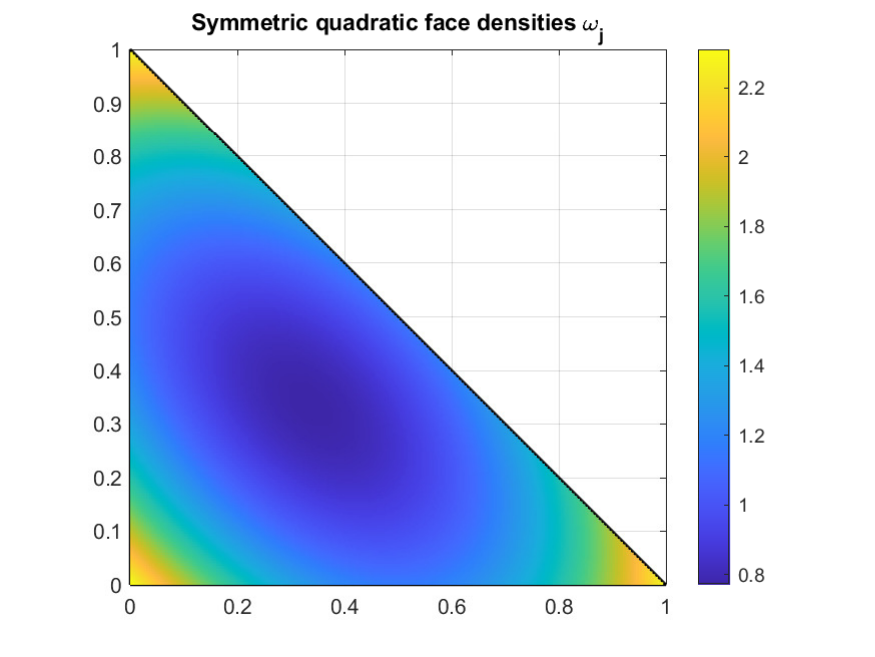}
\caption{Symmetric quadratic face densities $\omega_j$ on the reference tetrahedron.}
      \label{fig:densitiesex2}
  \end{figure}

\medskip

\noindent
In the following we define

\begin{equation}\label{pi1ab}
   \mathcal{H}:p\in\mathbb{P}_{2}(T) \mapsto \displaystyle\left[\mathcal{I}_{1}(p),\dots,\mathcal{I}_{4}(p),\mathcal{L}_{1}(p), \dots,\mathcal{L}_{4}(p),\mathcal{V}_{1}(p),\mathcal{V}_{2}(p)\right]^{\top} \in \mathbb{R}^{10}.
\end{equation}
Let
\begin{equation*}
\mathrm{B}^{\mathrm{compl}}=\left\{\lambda_1,\lambda_2,\lambda_3,\lambda_4, \lambda_1\lambda_2, \lambda_1\lambda_3, \lambda_1\lambda_4,
\lambda_2\lambda_3, \lambda_2\lambda_4, \lambda_3\lambda_4\right\}.
\end{equation*}
be the chosen basis of $\mathbb{P}_2(T)$.
With respect to $\mathrm{B}^{\mathrm{compl}}$, the matrix of $\mathcal{H}$ has the block form
\[
\mathcal{H}= \left[ \begin{array}{c|c} \mathcal{M} & \mathcal{N}\\ \hline \B 0 & \mathcal{D}^{\mathrm{fv}} \end{array} \right]\in\mathbb{R}^{10\times10}, 
\]
where $\mathcal{M}\in\mathbb{R}^{4\times 4}$,  $\mathcal{N}\in\mathbb{R}^{4\times 6}$, and $\mathcal{D}^{\mathrm{fv}}\in\mathbb{R}^{6\times 6}$.
We write its inverse column-wise as
\begin{equation*}
\tilde{\mathcal{H}}=\mathcal{H}^{-1}=\left[\tilde{\boldsymbol{h}}_{1},\dots,\tilde{\boldsymbol{h}}_{10}\right]\in\mathbb{R}^{10\times 10}.
\end{equation*}

\begin{theorem}\label{theoimp_ab}
The basis functions of the enriched triple $\mathcal{AF}^{\mathrm{fv}}$ can be written as
\begin{equation}\label{genbasis_ab}
\chi_{\ell}=\left\langle \tilde{\B{h}}_{\ell}, \B{\Lambda}\right\rangle, \quad \ell=1,\dots,10,
\end{equation}
\noindent
where $\B{\Lambda}=\left[\Lambda_1,\dots,\Lambda_{10}\right]=\left[\lambda_{1},\lambda_2,\lambda_3,\lambda_{4},\lambda_1\lambda_2, \lambda_1\lambda_3, \lambda_1\lambda_4,
\lambda_2\lambda_3, \lambda_2\lambda_4, \lambda_3\lambda_4\right]^\top.$
\end{theorem}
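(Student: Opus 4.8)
The plan is to recognize that the $\chi_{\ell}$ are, by definition, the dual basis of $\mathbb{P}_2(T)$ associated with the ordered family of functionals collected in $\mathcal{H}$, and then to identify this dual basis with the columns of $\mathcal{H}^{-1}$ read off in the basis $\mathrm{B}^{\mathrm{compl}}$. First I would fix the ordering of the ten degrees of freedom in $\Sigma^{\mathrm{fv}}$ as $\mathcal{I}_1,\dots,\mathcal{I}_4,\mathcal{L}_1,\dots,\mathcal{L}_4,\mathcal{V}_1,\mathcal{V}_2$, matching the definition of $\mathcal{H}$ in~\eqref{pi1ab}, and write $\mathcal{L}^{(m)}$ for the $m$-th functional. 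Expanding any $p\in\mathbb{P}_2(T)$ as $p=\sum_{n=1}^{10}c_n\Lambda_n$ with coefficient vector $\B{c}=[c_1,\dots,c_{10}]^{\top}$, linearity gives $\mathcal{L}^{(m)}(p)=\sum_{n=1}^{10}c_n\,\mathcal{L}^{(m)}(\Lambda_n)$, so that the matrix of $\mathcal{H}$ relative to $\mathrm{B}^{\mathrm{compl}}$ has $(m,n)$-entry $\mathcal{L}^{(m)}(\Lambda_n)$ and $\mathcal{H}(p)=\mathcal{H}\,\B{c}$. This is exactly the displayed block matrix: the vanishing lower-left block records that each $\mathcal{L}_j$ and $\mathcal{V}_k$ annihilates the affine basis functions $\lambda_1,\dots,\lambda_4$ by construction, while the lower-right block is $\mathcal{D}^{\mathrm{fv}}$ and the upper blocks are $\mathcal{M}$ and $\mathcal{N}$.

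Next I would confirm that $\mathcal{H}$ is invertible, which is what makes $\tilde{\mathcal{H}}=\mathcal{H}^{-1}$ meaningful. By the block-triangular form, $\det\mathcal{H}=\det\mathcal{M}\,\det\mathcal{D}^{\mathrm{fv}}$. The factor $\det\mathcal{M}\neq 0$ follows from Lemma~\ref{lem:invert-M}, since $\mathcal{M}$ is precisely the matrix of the unisolvent affine triple $\left(T,\mathbb{P}_1(T),\{\mathcal{I}_j\}\right)$ in the basis $\{\lambda_i\}$; the factor $\det\mathcal{D}^{\mathrm{fv}}\neq 0$ follows from $\operatorname{rank}(\mathcal{D}^{\mathrm{fv}})=6$, which holds by Theorem~\ref{thrmimps} under the standing unisolvence hypothesis on $\mathcal{AF}^{\mathrm{fv}}$. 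Hence $\mathcal{H}$ is nonsingular and $\tilde{\mathcal{H}}=[\tilde{\B{h}}_1,\dots,\tilde{\B{h}}_{10}]$ is well defined.

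Finally I would carry out the duality computation. The basis functions are characterized by $\mathcal{L}^{(m)}(\chi_{\ell})=\delta_{m\ell}$ for all $m,\ell$. Writing $\chi_{\ell}=\sum_{n=1}^{10}a_{n,\ell}\Lambda_n$ and collecting the coefficients into a matrix $A=(a_{n,\ell})$, the relations $\mathcal{L}^{(m)}(\chi_{\ell})=\sum_n a_{n,\ell}\,\mathcal{L}^{(m)}(\Lambda_n)=\delta_{m\ell}$ read $\mathcal{H}A=I_{10}$ in matrix form. Invertibility of $\mathcal{H}$ forces $A=\mathcal{H}^{-1}=\tilde{\mathcal{H}}$, so the $\ell$-th column of $A$ equals $\tilde{\B{h}}_{\ell}$. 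Reading off the expansion of $\chi_{\ell}$ yields $\chi_{\ell}=\sum_{n=1}^{10}(\tilde{\B{h}}_{\ell})_n\Lambda_n=\langle\tilde{\B{h}}_{\ell},\B{\Lambda}\rangle$, which is~\eqref{genbasis_ab}. This is a routine change-of-basis argument with no genuine analytic content; the only step requiring care is the bookkeeping that identifies the matrix of $\mathcal{H}$ with the array $\left(\mathcal{L}^{(m)}(\Lambda_n)\right)_{m,n}$ and checks that the dual-basis condition transposes into $\mathcal{H}A=I_{10}$ rather than $A\mathcal{H}=I_{10}$, so I expect the main (purely formal) obstacle to be maintaining consistent row/column conventions throughout.
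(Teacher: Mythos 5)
Your proposal is correct and follows essentially the same route as the paper: expand each $\chi_{\ell}$ in the basis $\mathrm{B}^{\mathrm{compl}}$, translate the Kronecker conditions into the matrix identity $\mathcal{H}\,\B{c}^{(\ell)}=\B{e}_{\ell}$, and read off $\B{c}^{(\ell)}=\tilde{\B{h}}_{\ell}$. The only difference is that you explicitly justify the invertibility of $\mathcal{H}$ via the block-triangular factorization $\det\mathcal{H}=\det\mathcal{M}\,\det\mathcal{D}^{\mathrm{fv}}$, a point the paper leaves implicit; this is a welcome but minor addition.
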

\begin{proof}
Since $\chi_\ell\in\mathbb{P}_2(T)$, there exists a unique vector 
$\B{c}^{(\ell)}=\left[c^{(\ell)}_{1},\dots,c^{(\ell)}_{10}\right]^{\top}\in\mathbb{R}^{10}$ such that
\begin{equation}\label{eq:phi-expansion-ab}
\chi_{\ell}(\B x)=\sum_{i=1}^{10}c^{(\ell)}_{i}\Lambda_{i}(\B x)
=\left\langle \B{c}^{(\ell)},\B{\Lambda}(\B x)\right\rangle .
\end{equation}
The basis functions satisfy the Kronecker conditions
$\mathcal{H}\left(\chi_\ell\right)=\B{e}_\ell$, where $\B{e}_\ell$ is the $\ell$-th canonical vector of $\mathbb{R}^{10}$.
Using the linearity of $\mathcal{H}$ and~\eqref{eq:phi-expansion-ab} gives
\[
\B{e}_{\ell}
=\mathcal{H}\left(\chi_{\ell}\right)
=\mathcal{H}\left(\sum_{i=1}^{10}c^{(\ell)}_{i}\Lambda_{i}\right)
=\mathcal{H}\B{c}^{(\ell)}.
\]
 Hence $\B{c}^{(\ell)}
=\tilde{\B h}_{\ell}.$ Substituting this identity into~\eqref{eq:phi-expansion-ab} yields~\eqref{genbasis_ab}.
\end{proof}
We define the reconstruction operator associated with the triple
$\mathcal{AF}^{\mathrm{fv}}$ introduced in~\eqref{AFfv} by

\begin{equation}\label{pi12}
   {\pi}_{2}^{{\mathrm{fv}}} : f \in C(T) \mapsto \displaystyle\displaystyle{\sum_{j=1}^{4}  {\mathcal{I}}_{j}\left(f\right)\chi_{j}+ \sum_{j=1}^{4}  {\mathcal{L}}_{j}\left(f\right){\chi}_{j+4} + \sum_{k=1}^{2}  {\mathcal{V}}_{k}\left(f\right){ \chi}_{k+8}} \in \mathbb{P}_2(T).
\end{equation}

\subsection{Purely–volumetric enrichment strategy}\label{sec:volume-enrichment} 
A natural variant of the previous enrichment strategy retains the face averages
while replacing all higher-order degrees of freedom with six purely volumetric
quadratic moments supported inside $T$.
In this purely volumetric setting the functionals $\mathcal{I}_j$ keep their
classical definition as uniform surface averages, whereas only the quadratic
moments involve the blended interior density.

Let $\Omega$ be an arbitrary probability density on $T$.  
We consider six quadratic polynomials $\psi_{ij}\in\mathbb{P}_2(T)$, $1\le i<j\le4$, 
satisfying the orthogonality conditions
\begin{equation*}
   \left\langle \psi_{ij},\varphi\right\rangle_{T,\Omega}=  0,
\quad \forall \varphi\in \mathbb{P}_{1}(T),\quad 1\le i<j\le4,
\end{equation*}
where $\left\langle\cdot,\cdot\right\rangle_{T,\Omega}$ is defined in~\eqref{innprd}.
The associated volumetric functionals are 
\begin{equation*}
     \mathcal{F}_{ij} : p \in \mathbb{P}_2(T) \mapsto \int_{T}\psi_{ij}(\B x)p(\B x)\Omega(\B x)d\B x,
\quad 1\le i<j\le4.\\ 
\end{equation*}
The corresponding quadratic histopolation triple is
\begin{equation*}
\mathcal{AF}^{\mathrm{vol}}
   =\left(T,\mathbb{P}_{2}(T),\Sigma^{\mathrm{vol}}\right),
\end{equation*}
where the set of degrees of freedom is
\begin{equation*}
    \Sigma^{\mathrm{vol}}
   =\left\{\mathcal{I}_{j} \, :\,  j=1,2,3,4\right\}\cup\left\{\mathcal{F}_{ij} \, : \, 1\le i<j\le4\right\}.
\end{equation*}
Its restriction to the pure quadratic subspace $\mathbb{S}_2(T)$ is represented by the symmetric $6\times6$ matrix associated with
\begin{equation*}
   \mathcal{D}^{\mathrm{vol}} : p \in \mathbb{S}_2(T) \mapsto \left[\mathcal{F}_{12}(p),\mathcal{F}_{13}(p), \mathcal{F}_{14}(p),\mathcal{F}_{23}(p),\mathcal{F}_{24}(p),\mathcal{F}_{34}(p)\right]^{\top}\in\mathbb{R}^6,
\end{equation*}
whose rank determines the unisolvence of the enriched triple.
The following theorem provides a precise characterization.
\begin{theorem}\label{thrmimps1}
The enriched triple $\mathcal{AF}^{\mathrm{vol}}$ is unisolvent if and only if $$\operatorname{rank}\left(\mathcal{D}^{\mathrm{vol}}\right)=6.$$
\end{theorem}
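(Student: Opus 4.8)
The plan is to mirror the argument used for Theorem~\ref{thrmimps}, exploiting the fact that the purely-volumetric functionals $\mathcal{F}_{ij}$ are built from polynomials $\psi_{ij}$ that are $\langle\cdot,\cdot\rangle_{T,\Omega}$-orthogonal to $\mathbb{P}_1(T)$. This orthogonality is the structural device that decouples the affine part from the quadratic part of any $p\in\mathbb{P}_2(T)$, exactly as in the face–volume case, and reduces the whole question to the invertibility of the $6\times 6$ matrix $\mathcal{D}^{\mathrm{vol}}$ on $\mathbb{S}_2(T)$.

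For the sufficiency direction I would start from a polynomial $p\in\mathbb{P}_2(T)$ annihilated by every functional in $\Sigma^{\mathrm{vol}}$ and decompose it uniquely as $p=a+q$ with $a\in\mathbb{P}_1(T)$ and $q\in\mathbb{S}_2(T)$. Since each $\psi_{ij}$ is orthogonal to all affine functions, one has $\mathcal{F}_{ij}(a)=\langle\psi_{ij},a\rangle_{T,\Omega}=0$, so the volumetric conditions collapse to $\mathcal{F}_{ij}(q)=0$ for all $1\le i<j\le 4$, that is $\mathcal{D}^{\mathrm{vol}}(q)=0$. Invoking $\operatorname{rank}(\mathcal{D}^{\mathrm{vol}})=6$ on the six-dimensional space $\mathbb{S}_2(T)$ forces $q=0$, so $p=a$ is affine; the vanishing of the face averages $\mathcal{I}_j(a)=0$ together with Lemma~\ref{lem:invert-M} then yields $a=0$, whence $p=0$.

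For the necessity direction I would argue by contraposition: assuming $\operatorname{rank}(\mathcal{D}^{\mathrm{vol}})<6$, I pick a nonzero $\tilde q\in\mathbb{S}_2(T)\cap\ker(\mathcal{D}^{\mathrm{vol}})$, use Lemma~\ref{lem:invert-M} to choose the unique affine $\tilde a$ with $\mathcal{I}_j(\tilde a)=-\mathcal{I}_j(\tilde q)$, and set $\tilde p=\tilde a+\tilde q$. The construction makes $\mathcal{I}_j(\tilde p)=0$, while the orthogonality of the $\psi_{ij}$ to $\mathbb{P}_1(T)$ and the kernel condition give $\mathcal{F}_{ij}(\tilde p)=\mathcal{F}_{ij}(\tilde a)+\mathcal{F}_{ij}(\tilde q)=0$. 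Thus $\tilde p\neq 0$ is annihilated by all degrees of freedom, contradicting unisolvence.

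This is essentially a transcription of the face–volume proof, so I do not expect a genuine obstacle; the only point requiring care is to confirm that the affine–quadratic splitting is respected by the volumetric functionals, which hinges entirely on the defining orthogonality $\langle\psi_{ij},\varphi\rangle_{T,\Omega}=0$ for $\varphi\in\mathbb{P}_1(T)$. I would also emphasize that the symmetry and positive definiteness of the moment matrix play no role in the equivalence itself—only the rank condition enters—so the proof is valid for an arbitrary admissible density $\Omega$, with those stronger properties reserved for the subsequent analysis of specific Dirichlet blends.
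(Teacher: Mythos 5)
Your proposal is correct and is precisely the argument the paper intends: the paper's proof of this theorem simply states that it is identical to that of Theorem~\ref{thrmimps} with the face--volume functionals replaced by the volumetric moments $\mathcal{F}_{ij}$, and your write-up carries out exactly that transcription, using the orthogonality of the $\psi_{ij}$ to $\mathbb{P}_1(T)$ to decouple the affine and quadratic parts and Lemma~\ref{lem:invert-M} for the affine component. Your closing observation that symmetry and positive definiteness play no role in the equivalence itself is also consistent with the paper, which reserves those properties for Lemma~\ref{lemma4}.
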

\begin{proof}
The proof is identical to that of Theorem~\ref{thrmimps},
with the face-volume functionals replaced by the purely volumetric moments
$\mathcal{F}_{ij}$.
\end{proof}

In analogy with~\eqref{pi12}, we can therefore define the quadratic reconstruction operator
$\pi^{\mathrm{vol}}_{2}$ associated with the volumetric enriched triple.
Unlike the previous construction, which combines face and interior moments and requires
separate face and volume densities, the present strategy
\begin{itemize}
  \item relies only on a single interior probability density $\Omega$, independent of any choice of face weights;
  \item replaces the four quadratic face moments $\mathcal{L}_{j}$ and the two interior
        moments $\mathcal{V}_{k}$ by six purely volumetric moments $\mathcal{F}_{ij}$,
        while preserving the total number of ten degrees of freedom. 
        In addition, the purely volumetric enrichment leads to symmetric positive definite
        moment matrices, guaranteeing numerical stability and simplifying
        large-scale implementations.
\end{itemize}

The next result holds for any positive interior density and establishes that
the moment matrix arising in purely volumetric enrichment has the fundamental
symmetry and positivity properties required for unisolvence.

\begin{lemma}\label{lemma4}
The matrix $\mathcal{D}^{\mathrm{vol}}$, expressed
in the basis $\mathrm B$ of $\mathbb{S}_2(T)$ defined in~\eqref{basisfun},
is symmetric and positive definite.
\end{lemma}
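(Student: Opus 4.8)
The plan is to recognize that, in the basis $\mathrm{B}=\{b_1,\dots,b_6\}$ of $\mathbb{S}_2(T)$, the matrix of $\mathcal{D}^{\mathrm{vol}}$ is exactly the Gram matrix of the six enriching polynomials $\psi_{ij}$ with respect to the weighted inner product $\langle\cdot,\cdot\rangle_{T,\Omega}$. Once this identification is in place, symmetry and positive semidefiniteness come for free from the properties of a Gram matrix, and positive definiteness reduces to the linear independence of the $\psi_{ij}$.

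First I would compute a generic entry. Writing $b_{kl}=\lambda_k\lambda_l$, the $(ij),(kl)$ entry is $\mathcal{F}_{ij}(b_{kl})=\langle \psi_{ij},\lambda_k\lambda_l\rangle_{T,\Omega}$. Here I use that each $\psi_{ij}$ is built by orthogonalizing $\lambda_i\lambda_j$ against the affine functions (as in the face–volume construction), so that $\lambda_k\lambda_l-\psi_{kl}\in\mathbb{P}_1(T)$ while $\psi_{ij}$ is $\Omega$-orthogonal to $\mathbb{P}_1(T)$. Substituting $\lambda_k\lambda_l=\psi_{kl}+(\text{affine})$ and discarding the affine part by orthogonality gives $\mathcal{F}_{ij}(b_{kl})=\langle\psi_{ij},\psi_{kl}\rangle_{T,\Omega}$, an expression invariant under $(ij)\leftrightarrow(kl)$; hence $\mathcal{D}^{\mathrm{vol}}$ is symmetric. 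Moreover, since $\Omega$ is a strictly positive density, $\langle f,f\rangle_{T,\Omega}=\int_T f^2\,\Omega>0$ for every nonzero $f\in\mathbb{P}_2(T)$, so $\langle\cdot,\cdot\rangle_{T,\Omega}$ is a genuine inner product and the Gram matrix is automatically positive semidefinite.

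To upgrade to strict positivity, I would evaluate the associated quadratic form. For $\B c=(c_{ij})\neq\B 0$ one has $\B c^{\top}\mathcal{D}^{\mathrm{vol}}\B c=\big\langle \sum_{i<j}c_{ij}\psi_{ij},\sum_{i<j}c_{ij}\psi_{ij}\big\rangle_{T,\Omega}=\big\|\sum_{i<j}c_{ij}\psi_{ij}\big\|_{T,\Omega}^2$, so it suffices to show $\sum c_{ij}\psi_{ij}\neq0$. Putting $q=\sum_{i<j}c_{ij}\lambda_i\lambda_j\in\mathbb{S}_2(T)$, which is nonzero because $\mathrm{B}$ is a basis, linearity of the orthogonalization gives $\sum c_{ij}\psi_{ij}=q-\Pi q$, where $\Pi$ denotes the $\Omega$-orthogonal projection onto $\mathbb{P}_1(T)$. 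This difference vanishes only if $q\in\mathbb{P}_1(T)$; but $q\in\mathbb{S}_2(T)$ and $\mathbb{S}_2(T)\cap\mathbb{P}_1(T)=\{0\}$, since the ten functions of $\mathrm{B}^{\mathrm{compl}}$ form a basis of $\mathbb{P}_2(T)$, i.e. $\mathbb{P}_2(T)=\mathbb{P}_1(T)\oplus\mathbb{S}_2(T)$. Hence $\sum c_{ij}\psi_{ij}\neq0$, its weighted norm is strictly positive (again using $\Omega>0$), and $\mathcal{D}^{\mathrm{vol}}$ is positive definite.

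The entrywise computation is routine; the main obstacle is the reduction to Gram form that makes symmetry manifest. This step does not hold for an arbitrary basis of the orthogonal complement of $\mathbb{P}_1(T)$: it relies essentially on the defining relation $\lambda_i\lambda_j-\psi_{ij}\in\mathbb{P}_1(T)$ paired with $\langle\psi_{ij},\varphi\rangle_{T,\Omega}=0$ for all affine $\varphi$, which is precisely what forces the off-diagonal entries to coincide. The second delicate point is the direct-sum identity $\mathbb{S}_2(T)\cap\mathbb{P}_1(T)=\{0\}$, which promotes the generic semidefiniteness of a Gram matrix to strict definiteness.
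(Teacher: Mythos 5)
Your proof is correct and follows the same overall strategy as the paper's: exhibit $\mathcal{D}^{\mathrm{vol}}$ as a Gram matrix for the inner product $\langle\cdot,\cdot\rangle_{T,\Omega}$ and evaluate the quadratic form as a weighted $L^2$ norm. There is, however, a substantive difference in execution that works in your favour. The paper asserts that, by definition, $\mathcal{D}^{\mathrm{vol}}=\bigl[\langle b_m,b_n\rangle_{T,\Omega}\bigr]_{1\le m,n\le 6}$, i.e.\ the Gram matrix of the monomials $\lambda_i\lambda_j$ themselves; but the $(m,n)$ entry of the matrix of $\mathcal{D}^{\mathrm{vol}}$ in the basis $\mathrm{B}$ is $\mathcal{F}_{(m)}(b_n)=\langle\psi_{(m)},b_n\rangle_{T,\Omega}$, and since $\psi_{(m)}-b_m$ is affine but $b_n$ is not orthogonal to affines, this does not equal $\langle b_m,b_n\rangle_{T,\Omega}$ (one can check against the explicit value of $s_\gamma$ in Section~\ref{ex3}, which matches $\langle\psi_{ij},\psi_{ij}\rangle_{T,\Omega}$ and not $\langle\lambda_i\lambda_j,\lambda_i\lambda_j\rangle_{T,\Omega}$). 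Your identification of the entries as $\langle\psi_{ij},\psi_{kl}\rangle_{T,\Omega}$, obtained by moving the affine discrepancy onto the side where orthogonality kills it, is the accurate one, and your reduction of strict positivity to the linear independence of the $\psi_{ij}$ via $\mathbb{P}_2(T)=\mathbb{P}_1(T)\oplus\mathbb{S}_2(T)$ supplies a step the paper leaves implicit. You are also right to flag that the whole argument hinges on $\lambda_i\lambda_j-\psi_{ij}\in\mathbb{P}_1(T)$: the general definition of the purely volumetric strategy in Section~\ref{sec:volume-enrichment} only requires the $\psi_{ij}$ to be quadratics orthogonal to $\mathbb{P}_1(T)$, and for such an arbitrary choice the matrix need be neither symmetric nor invertible, so the lemma implicitly assumes the Gram--Schmidt normalization that is made explicit only in the Dirichlet realization.
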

\begin{proof}
By definition
\begin{equation*}
\mathcal{D}^{\mathrm{vol}}=\left[\left\langle b_m,b_n\right\rangle_{T,\Omega}\right]_{1\le m,n\le6},
\end{equation*}
where $b_m\in\mathrm B$ and
$\langle\cdot,\cdot\rangle_{T,\Omega}$ is the weighted
inner product introduced in~\eqref{innprd}.
Since this is a real inner product, $\mathcal{D}^{\mathrm{vol}}$ is
symmetric. Let $\B c=\left[c_1,\dots,c_6\right]^\top\in\mathbb R^6$ and define
\begin{equation}\label{qpols}
    q(\B x)=\sum_{m=1}^6 c_m b_m(\B x)\in \mathbb{S}_2(T).
\end{equation}
Then
\begin{equation*}
    \B c^\top \mathcal{D}^{\mathrm{vol}} \B c
   = \left\langle q,q\right\rangle_{T,\Omega}
   = \int_T \left\lvert q(\B x)\right\rvert^2\Omega(\B x)d\B x .
\end{equation*}
Since $\Omega$ is positive on the interior of $T$, the integral
vanishes only when $q=0$. By \eqref{qpols} this implies $c_1=\cdots=c_6=0$, proving that
$\mathcal{D}^{\mathrm{vol}}$ is symmetric and positive definite.
\end{proof}

\subsection{Edge-Face enrichment strategy}
We finally introduce a third enrichment strategy on tetrahedral meshes,
referred to as the \emph{edge-face strategy}.
This construction retains the classical four face averages, thereby guaranteeing
exact mass conservation on each face, while enriching the quadratic component
of $\mathbb{P}_2(T)$ through probabilistic moments supported on the
one-dimensional edges of the tetrahedron. 
For $1\le i<j\le4$ let $\epsilon_{ij}$ denote the edge connecting
$\B{v}_i$ and $\B{v}_j$, with parametrization
\begin{equation*}
    x_{ij}(t) = (1-t)\B v_i + t\B v_j, \quad t\in[0,1].
\end{equation*}
Using~\eqref{propisp}, the barycentric coordinates of a point on $\epsilon_{ij}$ satisfy
\begin{equation*}
\lambda_k\bigl(x_{ij}(t)\bigr)=
\begin{cases}
1 - t, & k = i,\\[4pt]
t, & k = j,\\[4pt]
0, & k \notin \{i,j\}.
\end{cases}
\end{equation*}
This key observation allows the construction of edge-supported degrees of
freedom in close analogy with the face and volume moments of the previous
sections. For each edge $\epsilon_{ij}$ we prescribe a one-dimensional probability density $\omega_{ij}(t),$ $t\in[0,1]$ and select a quadratic polynomial $q_{ij}\in\mathbb{P}_2([0,1])$ orthogonal to all
affine functions with respect to $\omega_{ij}$.
The associated \emph{edge moment} is defined by
\begin{equation*}
   \mathcal{L}_{ij} : p \in \mathbb{P}_2(T) \mapsto \int_0^1 p\left(x_{ij}(t)\right)q_{ij}(t)\omega_{ij}(t)dt\in \mathbb R,
\quad 1\le i<j\le4.
\end{equation*}
Collecting these functionals, we define the edge-face enriched triple
\begin{equation*}
\mathcal{AF}^{\mathrm{ef}}
=\left(T,\mathbb{P}_2(T),\Sigma^{\mathrm{ef}}\right),
\end{equation*}
where $\Sigma^{\mathrm{ef}}
= \left\{\mathcal{I}_j \, :\, j=1,2,3,4\right\}\cup\left\{\mathcal{L}_{ij}\,:\,1\le i<j\le4\right\}.$
In analogy with the previous approaches, the restriction of
$\Sigma^{\mathrm{ef}}$ to the pure quadratic subspace $\mathbb{S}_2(T)$
is represented by the matrix associated with the linear map
\begin{equation*}
   \mathcal{D}^{\mathrm{ef}} : p \in \mathbb{S}_2(T) \mapsto \left[\mathcal{L}_{12}(p),\mathcal{L}_{13}(p), \mathcal{L}_{14}(p),\mathcal{L}_{23}(p),\mathcal{L}_{24}(p),\mathcal{L}_{34}(p)\right]^{\top}\in\mathbb{R}^6,
\end{equation*}
whose rank determines the unisolvence of the enriched triple.
The following result provides the unisolvence criterion for the edge-face construction.

\begin{theorem}[Unisolvence of the edge-face strategy]
The triple $\mathcal{AF}^{\mathrm{ef}}$ is unisolvent if and only if
\begin{equation*}
    \int_0^1 t(1-t)q_{ij}(t)\omega_{ij}(t)dt \neq 0, \quad 1\le i<j\le 4.
\end{equation*}
\end{theorem}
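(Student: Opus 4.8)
The plan is to follow the same two-step pattern used for Theorem~\ref{thrmimps}: first reduce unisolvence of the full triple to a rank condition on the pure quadratic block, and then exploit the edge structure to make that block completely explicit. As a first step I would verify that the reduction to $\mathbb{S}_2(T)$ goes through verbatim. Writing any $p\in\mathbb{P}_2(T)$ as $p=a+q$ with $a\in\mathbb{P}_1(T)$ and $q\in\mathbb{S}_2(T)$, the face averages $\mathcal{I}_j$ control the affine part $a$ through Lemma~\ref{lem:invert-M}, while each edge moment $\mathcal{L}_{ij}$ annihilates $a$: the restriction of an affine function on $T$ to the edge $\epsilon_{ij}$ is an affine function of the parameter $t$, and $q_{ij}$ is orthogonal to all such affine functions with respect to $\omega_{ij}$, so $\mathcal{L}_{ij}(a)=0$ and $\mathcal{L}_{ij}(p)=\mathcal{L}_{ij}(q)$. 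Exactly as in Theorem~\ref{thrmimps}, this yields that $\mathcal{AF}^{\mathrm{ef}}$ is unisolvent if and only if $\operatorname{rank}(\mathcal{D}^{\mathrm{ef}})=6$.

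The decisive step is to compute the matrix of $\mathcal{D}^{\mathrm{ef}}$ in the basis $\mathrm{B}=\{\lambda_i\lambda_j:1\le i<j\le4\}$ of~\eqref{basisfun}, using the barycentric description of an edge recorded above: on $\epsilon_{mn}$ one has $\lambda_m=1-t$, $\lambda_n=t$, and $\lambda_k=0$ for $k\notin\{m,n\}$. The key observation is that the restriction of a basis element $\lambda_i\lambda_j$ to the edge $\epsilon_{mn}$ vanishes identically unless $\{i,j\}=\{m,n\}$, since otherwise at least one factor carries an index outside $\{m,n\}$ and is therefore zero on the edge; when $\{i,j\}=\{m,n\}$ the restriction equals $t(1-t)$. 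Substituting into the definition of $\mathcal{L}_{mn}$ gives
\[
\mathcal{L}_{mn}(\lambda_i\lambda_j)=
\begin{cases}
\displaystyle\int_0^1 t(1-t)q_{mn}(t)\omega_{mn}(t)\,dt, & \{i,j\}=\{m,n\},\\[2mm]
0, & \text{otherwise},
\end{cases}
\]
so that $\mathcal{D}^{\mathrm{ef}}$ is a diagonal $6\times6$ matrix whose diagonal entries are exactly the six numbers $d_{ij}=\int_0^1 t(1-t)q_{ij}(t)\omega_{ij}(t)\,dt$.

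To finish, I would only need to record that a diagonal matrix has full rank if and only if every diagonal entry is nonzero. Hence $\operatorname{rank}(\mathcal{D}^{\mathrm{ef}})=6$ holds precisely when $d_{ij}\neq0$ for all $1\le i<j\le4$, which is the stated condition; combining this with the equivalence of the first paragraph completes the argument.

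I expect no genuine obstacle here. In contrast with the face-volume case, where the quadratic functionals couple several monomials and $\mathcal{D}^{\mathrm{fv}}$ must be examined through its determinant, the edge-supported construction diagonalizes the interaction because each product $\lambda_i\lambda_j$ is supported, in the edge sense, on the single edge $\epsilon_{ij}$. The only point that deserves careful writing is the vanishing pattern of the restrictions of $\lambda_i\lambda_j$ to $\epsilon_{mn}$, which is nonetheless immediate once the edge barycentric coordinates are used.
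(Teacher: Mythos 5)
Your proposal is correct and follows essentially the same route as the paper: reduce to the quadratic subspace $\mathbb{S}_2(T)$ via Lemma~\ref{lem:invert-M} (the paper invokes this reduction tersely, you spell out why the edge moments annihilate affine functions), then observe that $\lambda_i\lambda_j$ restricted to $\epsilon_{mn}$ vanishes unless $\{i,j\}=\{m,n\}$, so that $\mathcal{D}^{\mathrm{ef}}$ is diagonal with entries $\int_0^1 t(1-t)q_{ij}(t)\omega_{ij}(t)\,dt$. No gaps.
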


\begin{proof}
By Lemma~\ref{lem:invert-M}, it suffices to prove the unisolvence of the
enriched functionals on the quadratic subspace $\mathbb{S}_2(T)$.
Using the standard basis $\mathrm{B}$ defined in~\eqref{basisfun},
each functional $\mathcal{L}_{ij}$ acts diagonally, i.e.,
\begin{equation}
\mathcal{L}_{ij}\left(\lambda_\ell\lambda_s\right)=
\begin{cases}
\int_{0}^{1} t(1-t)q_{ij}(t)\omega_{ij}(t)dt,
  & \text{if }\{\ell,s\}=\{i,j\},\\[6pt]
0, & \text{if }\{\ell,s\}\neq\{i,j\}.
\end{cases}
\end{equation}
Consequently, the operator $\mathcal{D}^{\mathrm{ef}}$ is represented by a diagonal matrix whose diagonal entries are precisely these
integrals.  
The requirement that all such integrals are nonzero is therefore equivalent to 
$\operatorname{rank}\left(\mathcal{D}^{\mathrm{ef}}\right)=6,$
which proves the claimed unisolvence.
\end{proof}

In analogy with~\eqref{pi12}, we can therefore define the quadratic reconstruction operator
$\pi^{\mathrm{ef}}_{2}$ associated with the enriched triple $\mathcal{AF}^{\mathrm{ef}}$.

\begin{example}[Beta edge densities]
A convenient family of edge densities is provided by the Beta law
\begin{equation}\label{eq:beta-density}
  \omega_{ij}^{(\zeta,\nu)}(t)
    = \frac{t^{\zeta-1}(1-t)^{\nu-1}}{B(\zeta,\nu)},
    \quad \zeta,\nu>0,
\end{equation}
where $B(\zeta,\nu)$ is the Euler Beta function; see
Fig.~\ref{densfig}.
  \begin{figure}
      \centering
\includegraphics[width=0.49\linewidth]{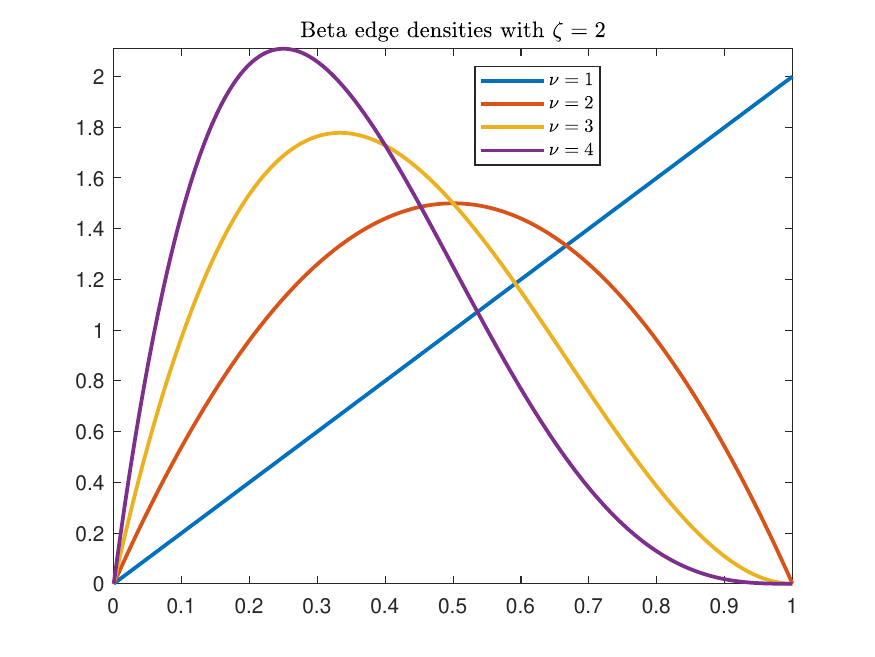}
\includegraphics[width=0.49\linewidth]{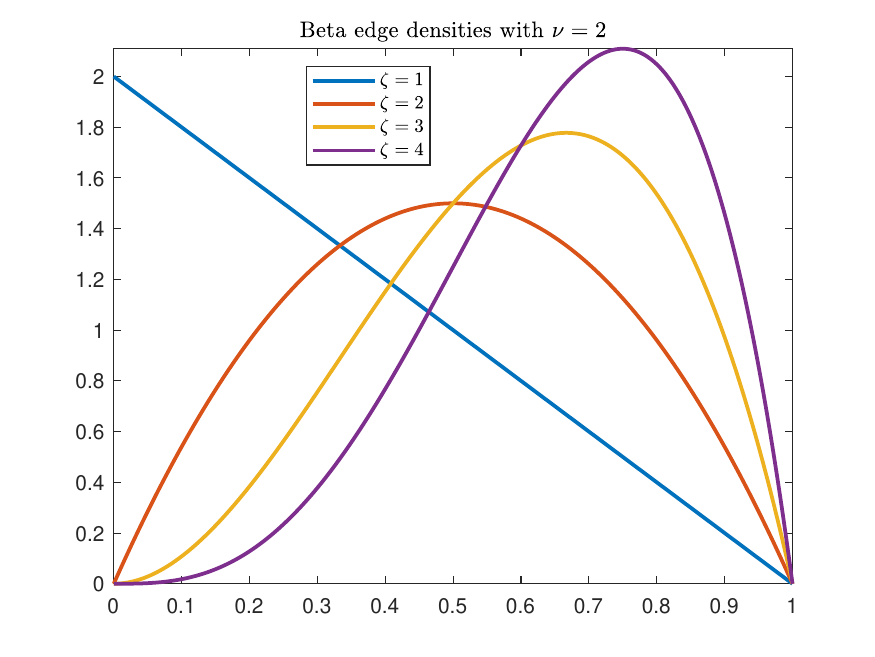}
\caption{Densities $\omega_{ij}^{(\zeta,\nu)}$ on $t\in[0,1]$:
   dependence on $\nu$ for fixed $\zeta$ (left) and on $\zeta$ for fixed $\nu$ (right).}
      \label{densfig}
  \end{figure}
In this case we select the quadratic polynomial
\begin{equation}\label{eq:q-zeta-nu}
  q_{\zeta,\nu}(t)
   = (\zeta+\nu+2)(\zeta+\nu+3)t^{2}
     -2(\zeta+1)(\zeta+\nu+2)t
     +(\zeta+1)(\zeta+2),
\end{equation}
which is, up to a constant factor, the Jacobi polynomial
$p_{J,2}^{(\zeta-1,\nu-1)}(2t-1)$.  
As a consequence, all the required properties are satisfied~\cite{szego1975orthogonal}.
\end{example}

\begin{remark}
We observe that
\begin{itemize}
  \item the edge-face strategy preserves face averages while enriching
        the quadratic space through six inexpensive one-dimensional moments;
  \item the unisolvence requirement reduces to a single nonvanishing
        moment on each edge, which holds for generic Beta parameters
        $\zeta,\nu>0$;
  \item the explicit formula for $q_{\zeta,\nu}$ enables a straightforward
        numerical implementation;
  \item compared with the face-volume and purely volumetric families,
        this strategy provides a directional enrichment
        particularly well suited to edge-aligned features.
\end{itemize}
\end{remark}

\section{Dirichlet densities}
\label{ex3}
This section illustrates in detail how the abstract enrichment strategies of
Section~\ref{sec2} can be realized with concrete parametric probability
densities.  
We focus on Dirichlet families, which provide a flexible yet analytically
tractable class of weights.  
Two complementary cases are addressed:
\begin{itemize}
   \item the \emph{Dirichlet face-volume enrichment}, where face and volume
         densities are independently parametrized by positive shape parameters
         $\alpha$ and $\beta$;
   \item the \emph{Dirichlet purely-volumetric enrichment}, where a single
         volumetric Dirichlet law, possibly combined with convex blends, governs
         all higher-order moments.
\end{itemize}

\subsection{Dirichlet face–volume enrichment}
We first consider the case where Dirichlet densities with parameters
$\alpha,\beta>0$ are prescribed on each face of the tetrahedron and in the
volume. Using the same notation as in the previous section, define
\begin{equation}\label{densins}
\omega_j^{(\alpha)}=\frac{\Gamma(3\alpha)}{\Gamma(\alpha)^3}
\mu_{1,j}^{\alpha-1}\mu_{2,j}^{\alpha-1}\mu_{3,j}^{\alpha-1}, \quad 
\Omega^{(\beta)}=\frac{\Gamma(4\beta)}{\Gamma(\beta)^4}
\prod_{i=1}^4 \lambda_i^{\beta-1}, \quad j=1,2,3,4.
\end{equation}
The parameter $\alpha$ controls the concentration on each face $F_j$, whereas
$\beta$ controls the concentration in the interior; see Fig.~\ref{fig:densities}.
   \begin{figure}
      \centering
\includegraphics[width=0.49\linewidth]{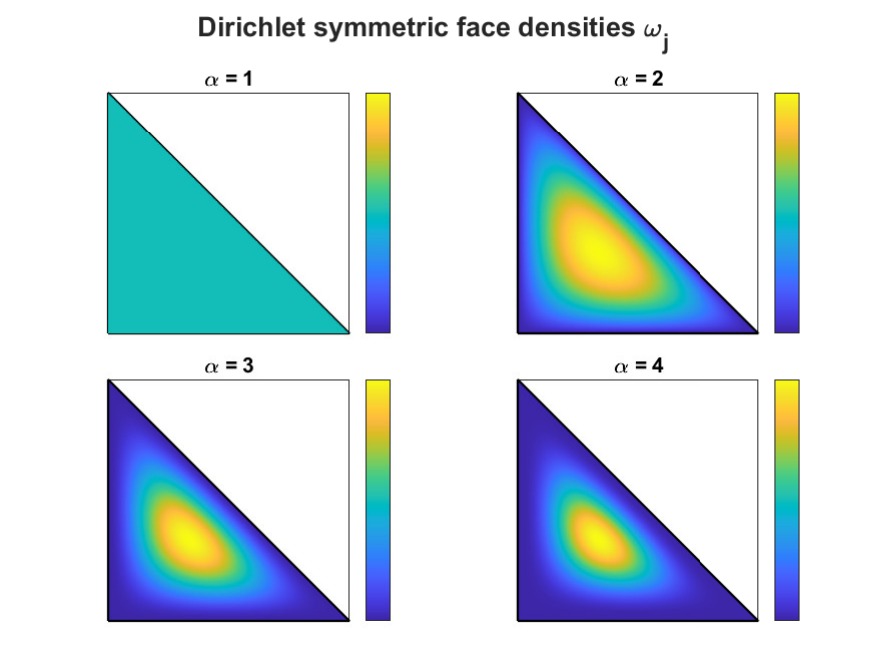}
\includegraphics[width=0.49\linewidth]{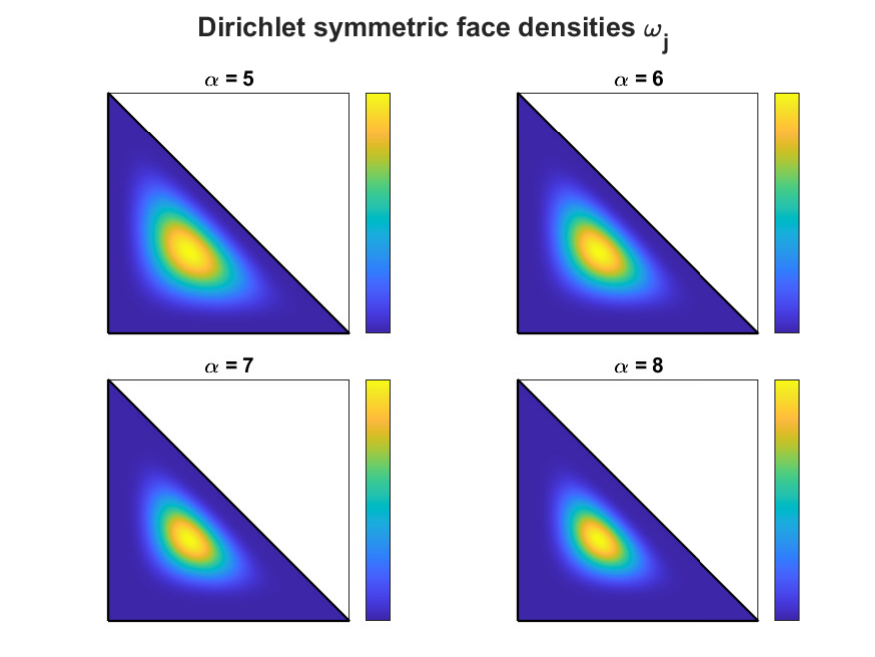}
\caption{Dirichlet face densities $\omega_j$ on the reference tetrahedron for different values of the parameter $\alpha$.}
      \label{fig:densities}
  \end{figure}
On each face we employ the quadratic polynomial
\begin{equation*}
      q_j^{(\alpha)}=\mu_{1,j}^2+\mu_{2,j}^2+\mu_{3,j}^2-c_\alpha, \quad
 c_\alpha=\frac{\alpha+1}{3\alpha+1}.
\end{equation*}
A direct computation shows that $q_j^{(\alpha)}$ is orthogonal to all affine
functions on $F_j$ with respect to the density $\omega_j^{(\alpha)}$, namely
\begin{equation*}
\int_{F_j} q_j^{(\alpha)}(\B x)\omega_j^{(\alpha)}(\B x)d\B x=0,\quad
\int_{F_j} q_j^{(\alpha)}(\B x)\mu_{r,j}(\B x)\omega_j^{(\alpha)}(\B x)d\B x=0,\quad r=1,2,3.
\end{equation*}
In the volume we consider the quadratic polynomials
\begin{equation*}
    \rho_1^{(\beta)}=\lambda_1\lambda_2+h_{\beta}-k_{\beta}\left(\lambda_1+\lambda_2\right), \quad  
   \rho_2^{(\beta)}=\lambda_1\lambda_3+h_{\beta}-k_{\beta}\left(\lambda_1+\lambda_3\right),
\end{equation*}
with parameters
\begin{equation*}
    h_{\beta}=\frac{\beta^2}{2(2\beta+1)(4\beta+1)}, \quad
    k_{\beta}=\frac{\beta}{2(2\beta+1)}.
\end{equation*}
A direct computation further shows that
\begin{equation*}
\frac{1}{\lvert T\rvert}\int_T \rho_k^{(\beta)}(\B x)\Omega^{(\beta)}(\B x)d\B x=0,\quad
\frac{1}{\lvert T\rvert}\int_T \rho_k^{(\beta)}(\B x)\lambda_r(\B x)\Omega^{(\beta)}(\B x)d\B x=0,
\end{equation*}
so that $\rho_1^{(\beta)}$ and $\rho_2^{(\beta)}$ are orthogonal to all affine
functions on $T$ with respect to the density $\Omega^{(\beta)}$. In the following we set
\begin{eqnarray*}
d_{\alpha}= -\frac{2\alpha}{9(3\alpha+1)^2(3\alpha+2)}, \quad v_{\beta}=
\frac{\beta\left(5\beta^{2}+5\beta+1\right)}  {8 \left(1 + 2 \beta\right)^2 \left(1 + 4 \beta\right)^2 \left(3 + 4 \beta\right)},
\end{eqnarray*}
\begin{equation*}
    w_{\beta}=
\frac{\beta^{3}}     {8 \left(1 + 2 \beta\right)^2 \left(1 + 4 \beta\right)^2 \left(3 + 4 \beta\right)}, \quad u_{\beta}=
-\frac{\beta^{2}}
{16 \left(1 + 2 \beta\right) \left(1 + 4 \beta\right)^2 \left(3 + 4 \beta\right)}.
\end{equation*}
Using the basis $\mathrm{B}$ defined in~\eqref{basisfun}, the matrix
$\mathcal{D}_{\alpha,\beta}^{\mathrm{fv}}$ takes the form
\begin{equation}\label{Dalphabeta}
\mathcal{D}_{\alpha,\beta}^{\mathrm{fv}}=
\begin{pmatrix}
0 & 0 & 0 & d_{\alpha} & d_{\alpha} & d_{\alpha} \\
0 & d_{\alpha} & d_{\alpha} & 0 & 0 & d_{\alpha} \\
d_{\alpha} & 0 & d_{\alpha} & 0 & d_{\alpha} & 0 \\
d_{\alpha} & d_{\alpha} & 0 & d_{\alpha} & 0 & 0 \\
v_{\beta} & u_{\beta} & u_{\beta} & u_{\beta} & u_{\beta} & w_{\beta} \\
u_{\beta} & v_{\beta} & u_{\beta} & u_{\beta} & w_{\beta} & u_{\beta}
\end{pmatrix}.
\end{equation}
A symbolic factorization yields
\begin{equation*}
\det \left(\mathcal D_{\alpha,\beta}^{\mathrm{fv}}\right)=
\frac{\alpha^4\beta^2}{2(3(3\alpha+1))^8(3\alpha+2)^4(2\beta+1)^2(4\beta+1)^2(4\beta+3)^2},
\end{equation*}
which is strictly positive for all $\alpha,\beta>0$. Therefore $\operatorname{rank}\left(\mathcal D_{\alpha,\beta}^{\mathrm{fv}}\right)=6$, for any $\alpha,\beta>0.$

\begin{remark}
The Dirichlet construction includes the uniform density of
Example~\ref{ex1} as a special case.
Indeed, setting $\alpha=\beta=1$ reduces the Dirichlet face and volume
densities to the uniform ones, so that all orthogonality relations and
associated functionals coincide with those of Example~\ref{ex1}.
Thus the densities in~\eqref{densins} provide a natural
two-parameter extension of the uniform setting.
\end{remark}
For these particular densities, we denote the triple~\eqref{AFfv} by
\begin{equation*}
       \mathcal{AF}^{\mathrm{fv}}_{\alpha,\beta}
      = \left(T,\mathbb{P}_2(T),\Sigma_{\alpha,\beta}^{\mathrm{fv}}\right).
\end{equation*}
In this case, the structure of the matrix $\mathcal{H}_{\alpha,\beta}$,
which determines the basis functions, becomes easier to handle,
since several of its blocks can be computed explicitly.
To establish this fact, we first prove a few auxiliary lemmas.

\begin{lemma}\label{lemma11ab}
For $\alpha>0$ and for any $j=1,2,3,4$, the following identities hold 
\begin{equation*}
    \int_{F_j}\mu_{r,j}(\B x)\omega_j^{(\alpha)}(\B x) d\B x= \frac{1}{3}, \quad 
\int_{F_j}\mu_{r,j}(\B x)\mu_{s,j}(\B x)\omega_j^{(\alpha)}(\B x) d\B x = \frac{\alpha}{3(3\alpha+1)},
\end{equation*}
where $\omega_j^{(\alpha)}$ is the face Dirichlet density defined in~\eqref{densins}.
\end{lemma}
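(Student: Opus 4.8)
The plan is to read both identities as the first- and (off-diagonal) second-order moments of the symmetric Dirichlet density $\omega_j^{(\alpha)}$ on the triangular face $F_j$, and to evaluate them directly from the simplex moment formula of Lemma~\ref{lem1} applied with $d=2$. I fix $j$ and suppress it from the notation, writing $\mu_1,\mu_2,\mu_3$ for the three barycentric coordinates on $F_j$. The crucial structural observation is that multiplying $\omega_j^{(\alpha)}=C_\alpha\,\mu_1^{\alpha-1}\mu_2^{\alpha-1}\mu_3^{\alpha-1}$ (with $C_\alpha=\Gamma(3\alpha)/\Gamma(\alpha)^3$) by any monomial in the $\mu_r$ merely shifts the exponents, so each resulting integral is again of the product-of-powers type to which Lemma~\ref{lem1} applies. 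I will therefore evaluate every moment as the ratio of the weighted monomial integral to the integral of the density itself; in this ratio the face area $|F_j|$ and the constant $C_\alpha$ cancel, leaving a pure quotient of Gamma functions.

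For the first identity, the permutation symmetry of $\omega_j^{(\alpha)}$ in $\mu_1,\mu_2,\mu_3$ lets me assume $r=1$, so the relevant integrand has exponents $(\alpha,\alpha-1,\alpha-1)$ and Lemma~\ref{lem1} gives
\[
\int_{F_j}\mu_1^{\alpha}\mu_2^{\alpha-1}\mu_3^{\alpha-1}\,d\B x
=|F_j|\,\frac{2\,\Gamma(\alpha+1)\Gamma(\alpha)^2}{\Gamma(3\alpha+1)},
\]
whereas the density, with exponents $(\alpha-1,\alpha-1,\alpha-1)$, integrates to $|F_j|\,2\,\Gamma(\alpha)^3/\Gamma(3\alpha)$. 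Dividing the former by the latter and using $\Gamma(\alpha+1)=\alpha\Gamma(\alpha)$ together with $\Gamma(3\alpha+1)=3\alpha\,\Gamma(3\alpha)$ collapses everything to $\alpha/(3\alpha)=1/3$, independently of $r$.

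For the second identity I take distinct indices, say $r=1,s=2$, so that $\mu_1\mu_2\,\omega_j^{(\alpha)}$ carries exponents $(\alpha,\alpha,\alpha-1)$; Lemma~\ref{lem1} then produces a Gamma quotient whose denominator is $\Gamma(3\alpha+2)$. Forming the same ratio against the density integral and applying $\Gamma(\alpha+1)=\alpha\Gamma(\alpha)$ and $\Gamma(3\alpha+2)=(3\alpha+1)(3\alpha)\Gamma(3\alpha)$ reduces the expression to $\alpha^2/\bigl(3\alpha(3\alpha+1)\bigr)=\alpha/\bigl(3(3\alpha+1)\bigr)$, as claimed.

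I do not anticipate a genuine obstacle: the whole argument is bookkeeping with the functional equation $\Gamma(z+1)=z\Gamma(z)$. The one point deserving care is the normalization—the constant $C_\alpha$ and the area factor $|F_j|$—which I neutralize by computing each moment against the unit-mass (probability) density, i.e.\ as a ratio in which these common factors drop out. As a consistency check, the two values are exactly the known mean and mixed second moment of the symmetric Dirichlet law $\mathrm{Dir}(\alpha,\alpha,\alpha)$, namely $\mathbb{E}[\mu_r]=1/3$ and $\mathbb{E}[\mu_r\mu_s]=\alpha/\bigl(3(3\alpha+1)\bigr)$ for $r\neq s$.
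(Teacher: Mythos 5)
Your proposal is correct and follows essentially the same route as the paper's proof: both express each moment as a ratio of two product-of-powers integrals evaluated via Lemma~\ref{lem1} with $d=2$ (so that $|F_j|$ and the normalizing constant cancel), reduce with $\Gamma(z+1)=z\Gamma(z)$, and invoke permutation symmetry to cover all index choices. The Gamma-function bookkeeping in both identities checks out exactly.
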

\begin{proof}
We set $\widehat\omega_j^{(\alpha)}=\mu_{1,j}^{\alpha-1}\mu_{2,j}^{\alpha-1}\mu_{3,j}^{\alpha-1}, $
so that
\begin{equation}\label{omegane-ab}
 \omega_j^{(\alpha)}=\frac{\widehat\omega_j^{(\alpha)}}{\int_{F_j}\widehat\omega_j^{(\alpha)}(\B x)d\B x}.
\end{equation}
For any $\alpha_1,\alpha_2,\alpha_3>-1$, Lemma~\ref{lem1} yields
\begin{equation}\label{eq:face-moment-ab}
\int_{F_j}\mu_{1,j}^{\alpha_1}(\B x)\mu_{2,j}^{\alpha_2}(\B x)\mu_{3,j}^{\alpha_3}(\B x)d\B x
=2\left|F_j\right|
\frac{\Gamma\left(\alpha_1+1\right)\Gamma\left(\alpha_2+1\right)\Gamma\left(\alpha_3+1\right)}
     {\Gamma\left(3+\alpha_1+\alpha_2+\alpha_3\right)}.
\end{equation}
Using~\eqref{omegane-ab} and~\eqref{eq:face-moment-ab}, and recalling $\Gamma(\beta+1)=\beta\Gamma(\beta)$, we obtain
\begin{equation*}
\int_{F_j}\mu_{1,j}(\B x)\omega_j^{(\alpha)}(\B x) d\B x
=\frac{\int_{F_j}\mu_{1,j}^{\alpha}(\B x)\mu_{2,j}^{\alpha-1}(\B x)\mu_{3,j}^{\alpha-1}(\B x) d\B x}{\int_{F_j}\mu_{1,j}^{\alpha-1}(\B x)\mu_{2,j}^{\alpha-1}(\B x)\mu_{3,j}^{\alpha-1}(\B x) d\B x}
=\frac{\Gamma(\alpha+1)}{\Gamma(\alpha)}\frac{\Gamma(3\alpha)}{\Gamma(3\alpha+1)}
=\frac{1}{3}.
\end{equation*}
Similarly,
\begin{equation*}
 \int_{F_j}\mu_{1,j}(\B x)\mu_{2,j}(\B x)\omega_j^{(\alpha)}(\B x) d\B x
=\frac{\int_{F_j}\mu_{1,j}^{\alpha}(\B x)\mu_{2,j}^{\alpha}(\B x)\mu_{3,j}^{\alpha-1}(\B x) d\B x}{\int_{F_j}\mu_{1,j}^{\alpha-1}(\B x)\mu_{2,j}^{\alpha-1}(\B x)\mu_{3,j}^{\alpha-1}(\B x) d\B x}
=\frac{\alpha}{3(3\alpha+1)}.
\end{equation*}
By symmetry the same formulas hold for every $r$ and every pair $r\neq s$, which completes the proof.
\end{proof}

The matrix  $\mathcal{H}_{\alpha,\beta}$ can be written as 
\begin{equation}\label{matrixH_blocks_abd}
\mathcal{H}_{\alpha,\beta}=
\left[
\begin{array}{c|c}
\mathcal{M} & \mathcal{N}_{\alpha}\\
\hline
\B 0 & \mathcal{D}_{\alpha,\beta}^{\mathrm{fv}}
\end{array}
\right]\in\mathbb{R}^{10\times10},
\end{equation}
where
\begin{equation}\label{matA_ab}
\mathcal{M}=\frac{1}{3}\left[
\begin{array}{cccc}
0 & 1 & 1 & 1\\
1 & 0 & 1 & 1\\
1 & 1 & 0 & 1\\
1 & 1 & 1 & 0
\end{array}
\right], \quad \mathcal{N}_{\alpha}=\frac{\alpha}{3(3\alpha+1)}\left[
\begin{array}{cccccc}
0 & 0 & 0 & 1 & 1 & 1\\
0 & 1 & 1 & 0 & 0 & 1 \\
1 & 0 & 1 & 0& 1 & 0\\
1 & 1 & 0 & 1 & 0 & 0
\end{array}
\right].
\end{equation}

\subsection{Dirichlet volumetric enrichment}

We now consider a purely volumetric realization of the enrichment strategy,
where all higher-order moments are supported inside the tetrahedron.
Let $\gamma>0$ and define the Dirichlet interior density
\begin{equation*}
    \Omega^{(\gamma)}=
\frac{\Gamma(4\gamma)}{\Gamma(\gamma)^4}
\prod_{i=1}^4\lambda_i^{\gamma-1}
\end{equation*}
on the tetrahedron $T$.  
For each pair $1\le i<j\le4$ we introduce the quadratic polynomial
\begin{equation}\label{spasasasasasa}
    \psi_{ij}(\B x)
   =\lambda_i(\B x)\lambda_j(\B x)
     + h_\gamma - k_\gamma\left(\lambda_i(\B x)+\lambda_j(\B x)\right),
   \quad 1\le i<j\le4,
\end{equation}
where 
\begin{equation*}
h_\gamma=\frac{\gamma^2}{2(2\gamma+1)(4\gamma+1)}, 
\quad
k_\gamma=\frac{\gamma}{2(2\gamma+1)}.    
\end{equation*}
By construction,
\begin{equation*}
    \left\langle \psi_{ij},\phi\right\rangle_{T,\Omega^{(\gamma)}}=0,
\quad
\forall \phi\in\mathbb{P}_{1}(T).
\end{equation*}
The associated volumetric moments are
\begin{equation*}
    \mathcal{F}^{(\gamma)}_{ij}(p)
   =\int_T \psi_{ij}(\B x)p(\B x)\Omega^{(\gamma)}(\B x)d\B x
\end{equation*}
and give rise to the set of degrees of freedom
$$\Sigma^{\mathrm{vol}}_{\gamma}
   =\left\{\mathcal{I}_{j}\, :\,j=1,2,3,4\right\}\cup\left\{\mathcal{F}^{(\gamma)}_{ij}\, : \, 1\le i<j\le4\right\}. $$
Using the basis $\mathrm{B}$ of $\mathbb{S}_2(T)$ defined in~\eqref{basisfun},
the restriction of these functionals to $\mathbb{S}_2(T)$
is represented by the symmetric matrix
\begin{equation*}
\mathcal{D}^{\mathrm{vol}}_{\gamma}=
\begin{pmatrix}
s_{\gamma} & t_{\gamma} & t_{\gamma} & t_{\gamma} & t_{\gamma} & z_{\gamma}\\
t_{\gamma} & s_{\gamma} & t_{\gamma} & t_{\gamma} & z_{\gamma} & t_{\gamma}\\
t_{\gamma} & t_{\gamma} & s_{\gamma} & z_{\gamma} & t_{\gamma} & t_{\gamma}\\
t_{\gamma} & t_{\gamma} & z_{\gamma} & s_{\gamma} & t_{\gamma} & t_{\gamma}\\
t_{\gamma} & z_{\gamma} & t_{\gamma} & t_{\gamma} & s_{\gamma} & t_{\gamma}\\
z_{\gamma} & t_{\gamma} & t_{\gamma} & t_{\gamma} & t_{\gamma} & s_{\gamma}
\end{pmatrix},
\end{equation*}
where
\begin{equation*}
s_{\gamma}=\frac{\gamma(5\gamma^2+5\gamma+1)}{8(1+2\gamma)^2(1+4\gamma)^2(3+4\gamma)},\quad
t_{\gamma}=-\frac{\gamma^2}{16(1+2\gamma)(1+4\gamma)^2(3+4\gamma)},
\end{equation*}
\begin{equation*}
    z_{\gamma}=\frac{\gamma^3}{8(1+2\gamma)^2(1+4\gamma)^2(3+4\gamma)}.
\end{equation*}
A straightforward symbolic computation shows that
\begin{equation}\label{dvolgamma}
\det\left(\mathcal{D}^{\mathrm{vol}}_{\gamma}\right)
=\frac{\gamma^{6}(\gamma+1)^4}{2^{18}\,(2\gamma+1)^{9}\,(4\gamma+1)^{7}\,(4\gamma+3)^{6}},
\end{equation}
which is strictly positive for all $\gamma>0$. 
Hence, by Theorem~\ref{thrmimps1}, the purely volumetric Dirichlet-enriched triple $\mathcal{AF}^{\mathrm{vol}}_{\gamma}$ is unisolvent for every $\gamma>0$.

\begin{remark}
As shown in Lemma~\ref{lemma4}, the matrix $\mathcal{D}^{\mathrm{vol}}_\gamma$ is symmetric and positive definite for every $\gamma>0$; hence its determinant is automatically  positive.  
\end{remark}

A natural extension of the Dirichlet volumetric enrichment is to interpolate
between the uniform interior density and a Dirichlet interior density.
Let $0\le\theta\le 1$ and fix $\gamma>0$.
We introduce the convex combination of the uniform density
$\Omega^{(1)}=|T|^{-1}$ and the Dirichlet density $\Omega^{(\gamma)}$,
\begin{equation}\label{omegamix}
    \Omega^{(\theta,\gamma)}(\B x)
=\theta\Omega^{(1)}(\B x)
     +(1-\theta)\Omega^{(\gamma)}(\B x).
\end{equation}
Clearly $\Omega^{(\theta,\gamma)}$ is a probability density on $T$ for every
$\theta\in[0,1]$ and $\gamma>0$, though it is not in general of Dirichlet type.
Using $\Omega^{(\theta,\gamma)}$ in place of $\Omega^{(\gamma)}$, we define the
purely volumetric moments
\begin{equation*}
\mathcal{F}_{ij}^{(\theta,\gamma)}(p)
   =\int_{T}\psi_{ij}(\B x)p(\B x)\Omega^{(\theta,\gamma)}(\B x)d\B x,
   \quad 1\le i<j\le4,    
\end{equation*}
where $\psi_{ij}$, $1\le i<j\le4$ are defined in~\eqref{spasasasasasa}. 
The corresponding volumetric enriched triple is $\mathcal{AF}^{\mathrm{vol}}_{\theta,\gamma}
   =\left(T,\mathbb{P}_{2}(T),
            \Sigma^{\mathrm{vol}}_{\theta,\gamma}\right)$,
with the set of degrees of freedom 
\begin{equation*}
\Sigma^{\mathrm{vol}}_{\theta,\gamma}
   =\left\{\mathcal{I}_j\, : \,j=1,2,3,4\right\}\cup
      \left\{\mathcal{F}_{ij}^{(\theta,\gamma)}\, : \, 1\le i<j\le4\right\}.
\end{equation*}
We denote by $\mathcal{D}^{\mathrm{vol}}_{\theta,\gamma}$ the restriction of
$\Sigma^{\mathrm{vol}}_{\theta,\gamma}$ to the pure quadratic subspace
$\mathbb{S}_{2}(T)$.
By the convex combination~\eqref{omegamix} defining $\Omega^{(\theta,\gamma)}$,
the corresponding moment matrix is itself a convex combination
\begin{equation*}
\mathcal{D}^{\mathrm{vol}}_{\theta,\gamma}
=\theta\mathcal{D}^{\mathrm{vol}}_{1}
     +(1-\theta)\mathcal{D}^{\mathrm{vol}}_{\gamma},
\end{equation*}
where $\mathcal{D}^{\mathrm{vol}}_{1}$ and
$\mathcal{D}^{\mathrm{vol}}_{\gamma}$ are the volumetric matrices
associated with the uniform and Dirichlet densities, respectively.
Since, by Lemma~\ref{lemma4}, both $\mathcal{D}^{\mathrm{vol}}_{1}$ and
$\mathcal{D}^{\mathrm{vol}}_{\gamma}$ are symmetric and positive definite,
their convex combination
$\mathcal{D}^{\mathrm{vol}}_{\theta,\gamma}$ is also symmetric and positive
definite for every $\theta\in[0,1]$ and $\gamma>0$.
In particular, $\det\left(\mathcal{D}^{\mathrm{vol}}_{\theta,\gamma}\right) > 0$, 
and by Theorem~\ref{thrmimps1} the enriched triple
$\mathcal{AF}^{\mathrm{vol}}_{\theta,\gamma}$ is unisolvent. 

 \begin{remark}
More generally, the interior density $\Omega^{(\theta,\gamma)}$ can be replaced
by any convex combination of two (or more) Dirichlet densities, namely
\[
  \Omega^{(\theta,\gamma_1,\gamma_2)} = \theta\Omega^{(\gamma_1)} + (1-\theta)\Omega^{(\gamma_2)}, \quad \theta\in[0,1], \quad \gamma_1,\gamma_2>0.
\]
Since each corresponding moment matrix is symmetric and positive definite,
the unisolvence argument applies verbatim.
\end{remark}

\begin{remark}[Limit of convex blending for face–volume enrichments]
The convex–combination argument above relies on the symmetry of
$\mathcal{D}^{\mathrm{vol}}_{\gamma}$.
For the first (face–volume) enrichment family this structural symmetry is
absent, so no analogous conclusion on the positive definiteness of a convex
blend can be drawn.
\end{remark}

\begin{remark}
In each of the three enrichment strategies, any collection of four linear
functionals that is unisolvent on $\mathbb{P}_1(T)$ could be used in place of the
specific face or volume averages $\mathcal{I}_i$, $i=1,2,3,4,$ introduced above.
This flexibility applies equally to the face-volume, purely volumetric,
and edge-face constructions.
\end{remark}

\section{Bi-Parametric tuning of Dirichlet densities}\label{sec:tuning}\label{sec3}

We address the mesh-level selection of the Dirichlet parameters
$\alpha,\beta>0$ used in the face and volume densities~\eqref{densins}.
Since $\det\left(\mathcal{D}_{\alpha,\beta}^{\mathrm{fv}}\right)>0$ for all $\alpha,\beta>0$,
the reconstruction operator $\pi^{\mathrm{fv}}_{2,\alpha,\beta}$ is well defined
over the entire parameter domain.

\medskip
\noindent
Given validation functions $\left\{f^{(r)}\right\}_{r=1}^R$ on $\mathcal Q$ and a sequence
of meshes $\left\{\mathcal T_n\right\}_{n=0}^N$, we determine the optimal pair
$\left(\alpha^\star,\beta^\star\right)$ by grid search on discrete sets
$\mathcal A,\mathcal B\subset (0,\infty)$:
\[
(\alpha^\star,\beta^\star) \in 
\arg\min_{(\alpha,\beta)\in\mathcal A\times\mathcal B}
\sum_{r=1}^R \sum_{n=0}^N
\left\| f^{(r)} - \pi^{\mathrm{fv}}_{2,\alpha,\beta}\!\left[f^{(r)};\mathcal T_n\right]\right\|_{L^1}.
\]
This offline procedure plays the role of hyperparameter tuning:
every candidate pair is admissible and the outcome can be used on all
tetrahedra and refinement levels.

\begin{algorithm}[H]
\caption{Global bi-parametric tuning of Dirichlet densities by grid search}
\label{alg:ab-grid}
\begin{algorithmic}[1]
\Require Validation functions $\{f^{(r)}\}_{r=1}^R$; meshes $\{\mathcal T_n\}_{n=0}^N$;
         candidate grids $\mathcal A,\mathcal B\subset (0,\infty)$;
         reconstruction operator $\pi^{\mathrm{fv}}_{2,\alpha,\beta}$
\Ensure Optimal pair $(\alpha^\star,\beta^\star)$
\State $\varepsilon_{\min} \gets +\infty$; \quad $(\alpha^\star,\beta^\star) \gets \text{undefined}$
\ForAll{$\alpha\in\mathcal A$}
  \ForAll{$\beta\in\mathcal B$}
    \State $\varepsilon^{\mathrm{fv}}_{2,\alpha,\beta}\gets 0$
    \For{$r=1$ to $R$}
      \For{$n=0$ to $N$}
        \State $u \gets \pi^{\mathrm{fv}}_{2,\alpha,\beta}\left[f^{(r)};\mathcal T_n\right]$
        \State $\varepsilon^{\mathrm{fv}}_{2,\alpha,\beta} \gets \varepsilon^{\mathrm{fv}}_{2,\alpha,\beta} + \left\|f^{(r)}-u\right\|_{L^1(\mathcal{Q};\mathcal T_n)}$
      \EndFor
    \EndFor
    \If{$\varepsilon^{\mathrm{fv}}_{2,\alpha,\beta} < \varepsilon_{\min}$}
       \State $\varepsilon_{\min}\gets \varepsilon^{\mathrm{fv}}_{2,\alpha,\beta}$; \quad $(\alpha^\star,\beta^\star)\gets(\alpha,\beta)$
    \EndIf
  \EndFor
\EndFor
\State \Return $(\alpha^\star,\beta^\star)$
\end{algorithmic}
\end{algorithm}

Once computed, this single pair can be applied uniformly to every tetrahedron
and mesh level, so that the quadratic reconstruction is fully specified and
reproducible.

\medskip\noindent
The same mesh-level tuning strategy applies verbatim to the second and third
enrichment families by replacing $(\alpha,\beta)$ and
$\pi^{\mathrm{fv}}_{2,\alpha,\beta}$ with the corresponding parameters and
reconstruction operators.

\section{Numerical results}\label{sec4}
We present a comprehensive numerical validation of the quadratic enriched
histopolation method proposed in this work. A key feature of the implementation is the
\emph{automatic, mesh-independent} selection of the probability densities that
characterize the enriched degrees of freedom.

We investigate three complementary enrichment strategies:
\begin{itemize}
  \item the face-volume Dirichlet enrichment with shape parameters $(\alpha,\beta)$;
  \item a purely volumetric enrichment based on a \emph{convex blend} of interior densities,
        with parameters $(\theta,\gamma)$;
  \item the Beta edge enrichment with parameters $(\zeta,\nu)$.
\end{itemize}
Rather than prescribing these parameters \emph{a priori}, we determine the optimal pairs
$\left(\alpha^\star,\beta^\star\right)$, $\left(\theta^\star,\gamma^\star\right)$, and
$\left(\zeta^\star,\nu^\star\right)$ by means of the grid search
procedures detailed in Algorithm~\ref{alg:ab-grid} and in its volumetric and edge-based
variants, respectively.

The comparison is therefore carried out among the classical histopolation
projector $\pi^{\mathrm{CH}}_{1}$, the quadratic face-volume projector 
$\pi^{\mathrm{fv}}_{2,\alpha^\star,\beta^\star}$, the quadratic volumetric
projector $\pi^{\mathrm{vol}}_{2,\theta^\star,\gamma^\star}$, and the quadratic
edge-face projector $\pi^{\mathrm{ef}}_{2,\zeta^\star,\nu^\star}$.
The validation set of functions consists of
\begin{equation*}
    f_1(x,y,z)=\sin(2\pi x)\sin(2\pi y)\sin(2\pi z), \quad f_2(x,y,z)= \sin(2 \pi xyz),
\end{equation*}
\begin{equation*}
    f_3(x,y,z)=\frac{1}{x^2+y^2+z^2+25}, \quad f_4(x,y,z)=e^{x^2+y^2+z^2},  
\end{equation*}
\begin{equation*}
    f_5(x,y,z)=\sin(x)\cos(y)e^{-z^2}, \quad f_6(x,y,z)= \left\lvert x^3\right\rvert + \left\lvert y^3\right\rvert + \left\lvert z^3\right\rvert,
\end{equation*}
\begin{equation*}
    f_7(x,y,z)=\sqrt{(x-0.5)^2 + (y-0.5)^2 + (z-0.5)^2},
\end{equation*}
\begin{eqnarray*}
f_8(x,y,z) = &&\sin\left(10\sqrt{(x-0.5)^2 + (y-0.5)^2 + (z-0.5)^2} \right)\times \\ &\times& e^{-\sqrt{(x-0.5)^2 + (y-0.5)^2 + (z-0.5)^2}}
\end{eqnarray*}
\noindent
all defined on the domain $\mathcal{Q}=[0,1]^3$. Spatial discretizations are based on a family of uniform
Delaunay-type tetrahedral meshes $\mathcal{T}_{n}=\left\{K_i\,:\, i=1,\dots,6(n-1)^3\right\},$  $n=5:5:25,$  obtained by splitting each cube of an $(n-1)^3$ Cartesian grid into six
tetrahedra; see Fig.~\ref{f1agwn}.

\begin{figure}
    \centering
\includegraphics[width=0.32\linewidth]{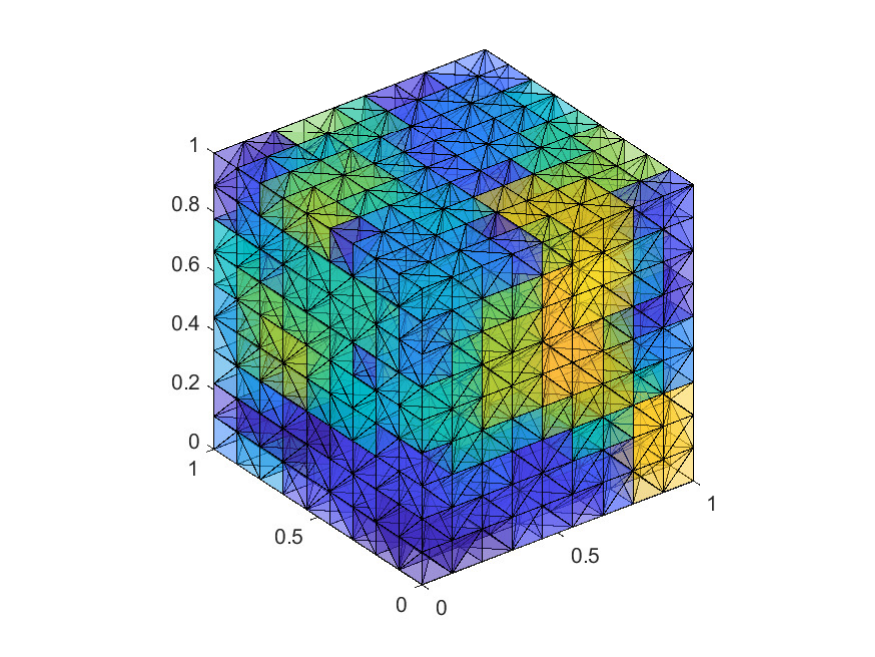}
\includegraphics[width=0.32\linewidth]{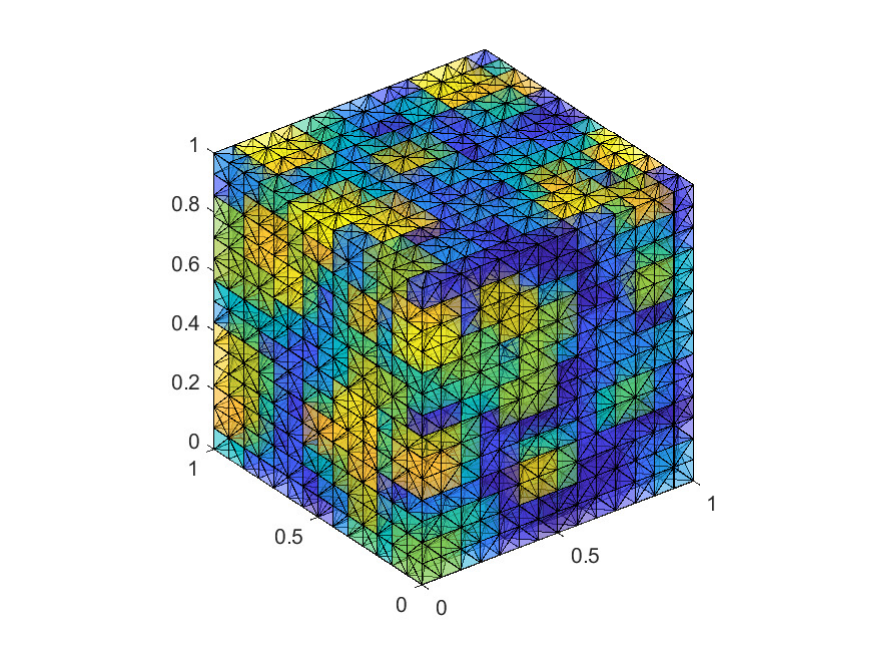}
\includegraphics[width=0.32\linewidth]{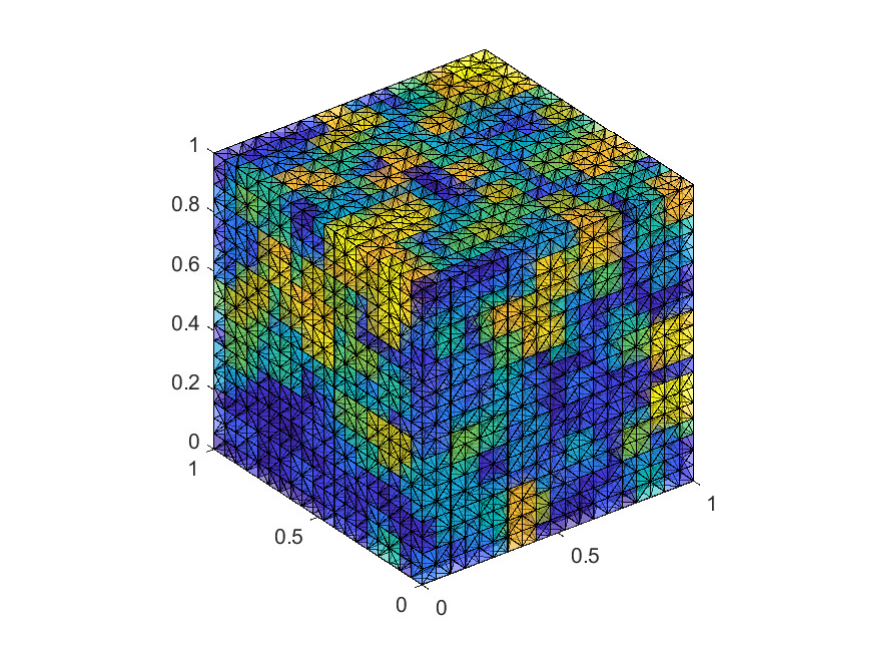}
    \caption{Regular Delaunay triangulations $\mathcal{T}_n$ for $n=10$ (left), $n=15$ (center), and $n=20$ (right).}
    \label{f1agwn}
\end{figure}
For every tetrahedron $K$ we compute the degrees of freedom of both the
standard linear and the enriched histopolation elements by means of accurate cubature
rules, ensuring that all reported errors reflect only the approximation
properties of the methods and are not polluted by quadrature noise.
All computations are performed in \textsc{MATLAB}.
Alongside the classical projector    $\pi^{\mathrm{CH}}_{1}$ we evaluate:
\begin{itemize}
  \item the quadratic face-volume projector
        $\pi^{\mathrm{fv}}_{2,\alpha^\star,\beta^\star}$;
  \item the quadratic volumetric projector$\pi^{\mathrm{vol}}_{2,\theta^\star,\gamma^\star}$;
  \item the quadratic edge-face projector $\pi^{\mathrm{ef}}_{2,\zeta^\star,\nu^\star}$.
\end{itemize}
The numerical study compares the classical scheme with all three
enriched strategies.

 \begin{figure}
      \centering
\includegraphics[width=0.49\linewidth]{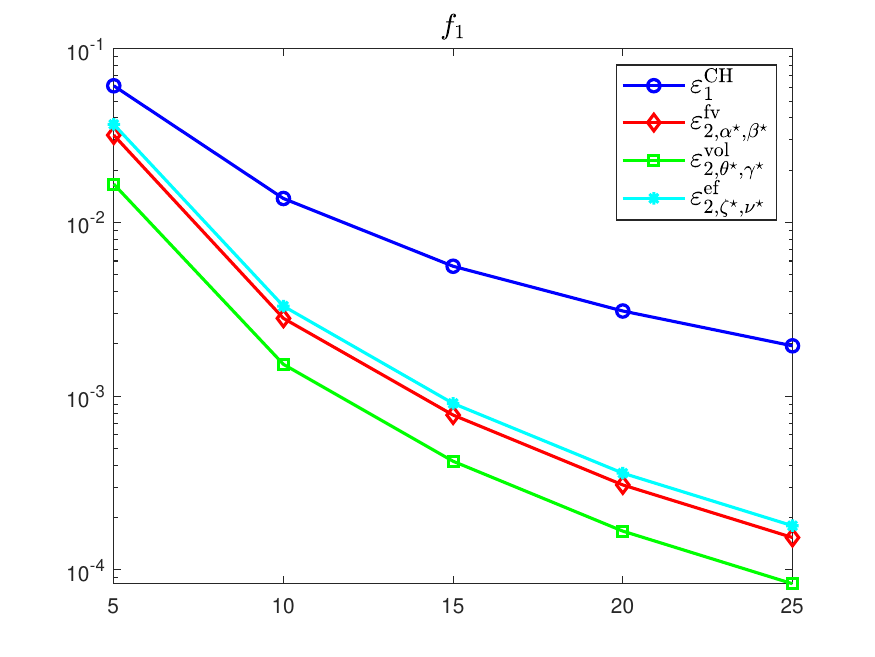}
\includegraphics[width=0.49\linewidth]{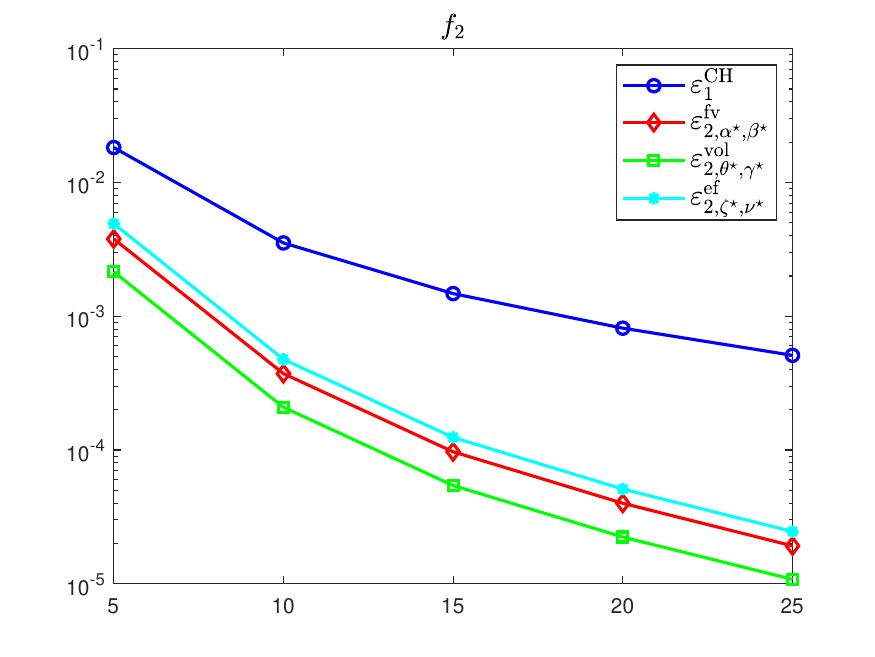}
\includegraphics[width=0.49\linewidth]{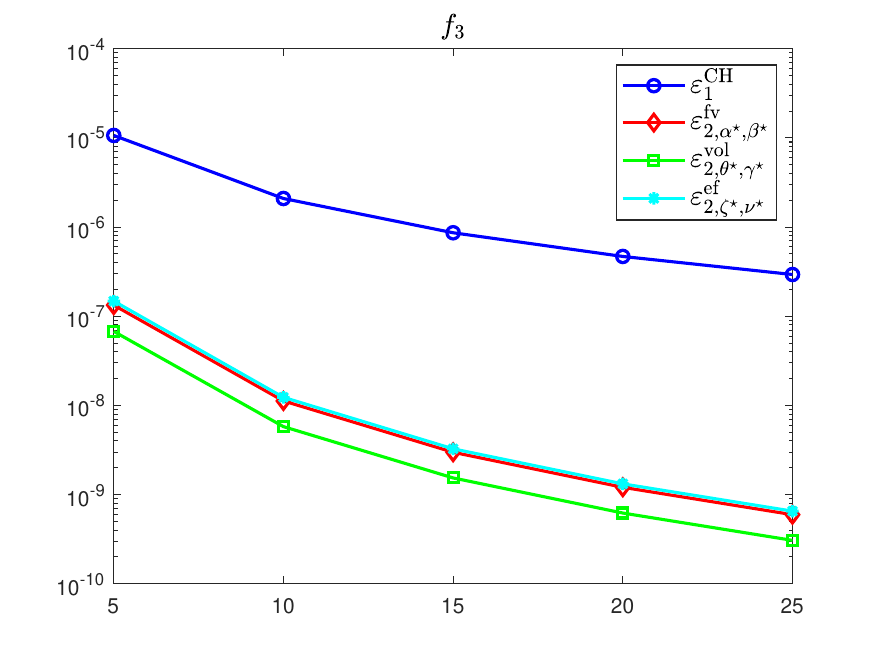}
\includegraphics[width=0.49\linewidth]{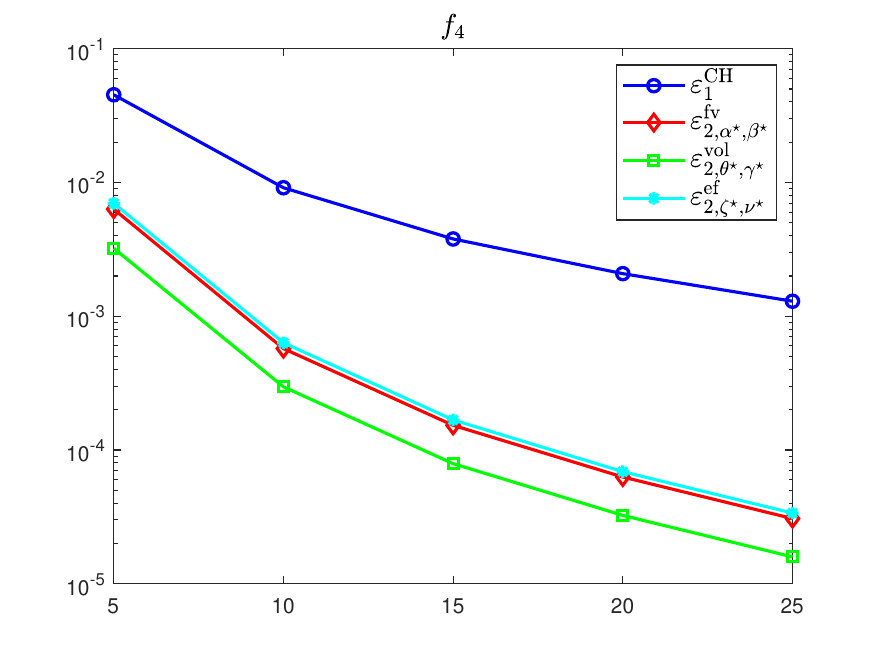}
\includegraphics[width=0.49\linewidth]{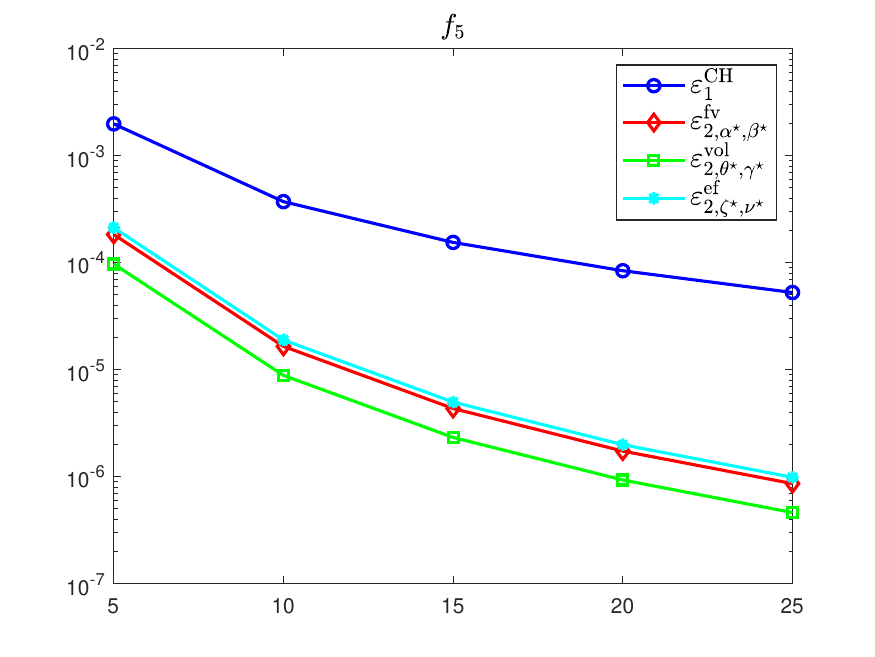}
\includegraphics[width=0.49\linewidth]{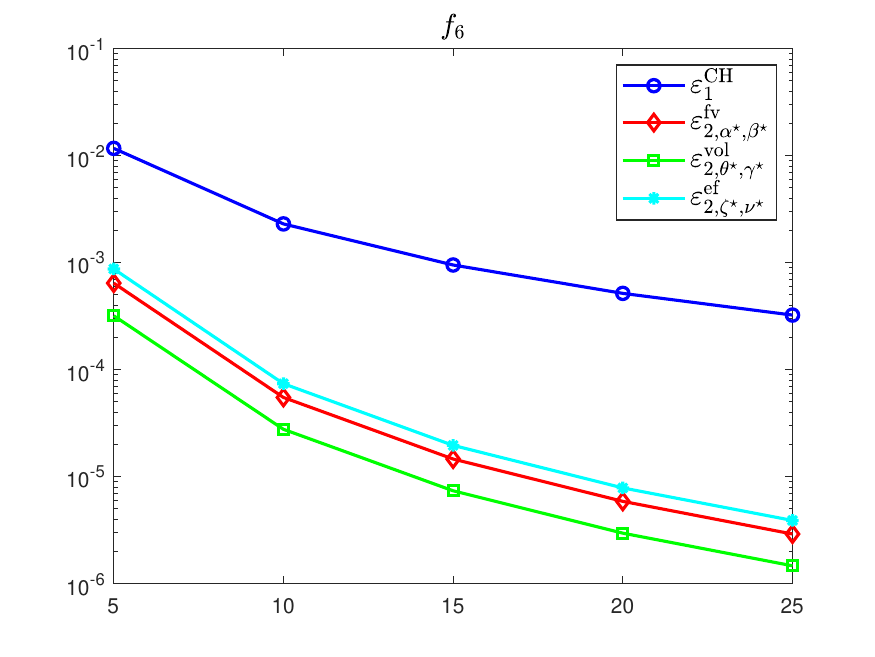}
\includegraphics[width=0.49\linewidth]{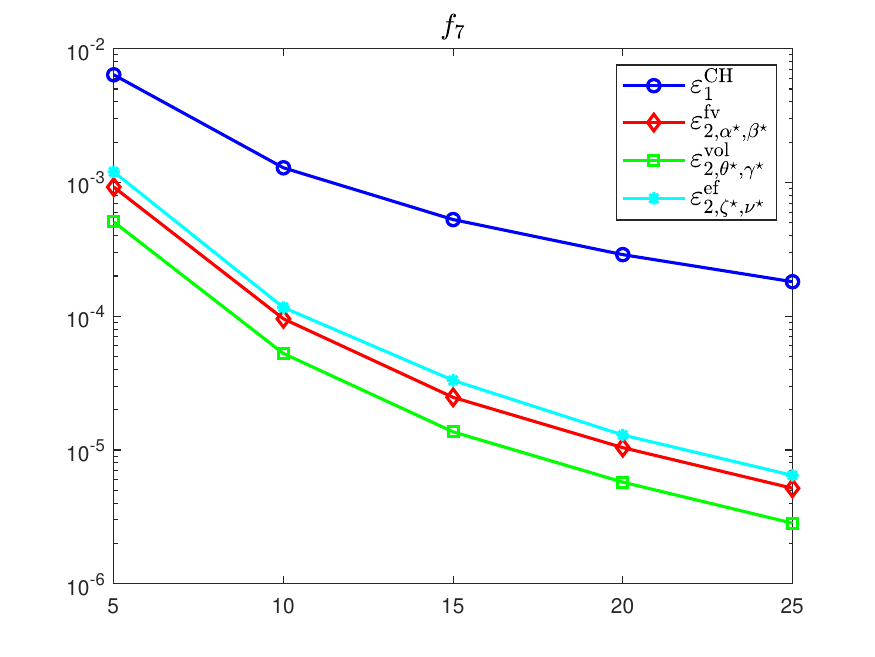}
\includegraphics[width=0.49\linewidth]{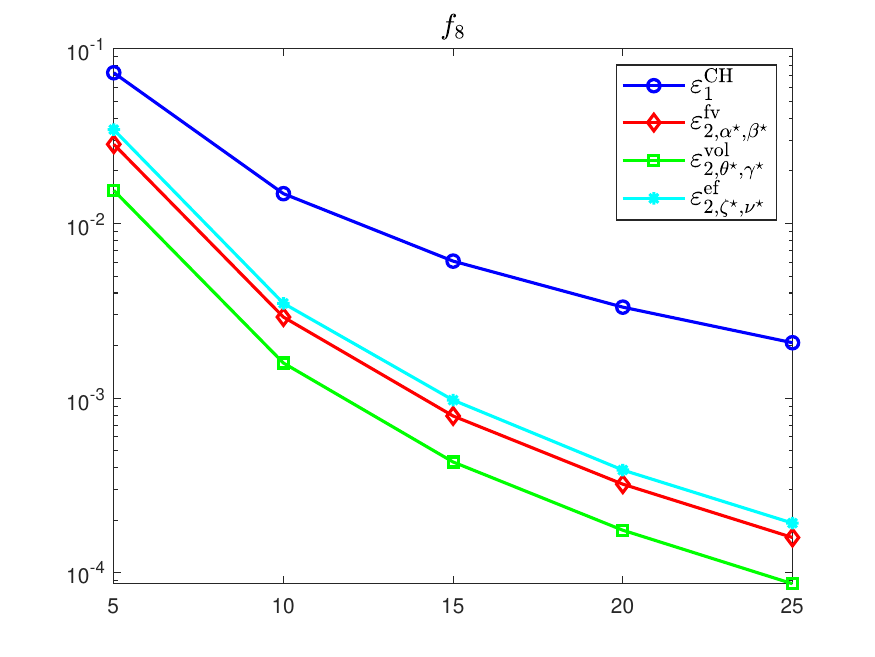}
      \caption{$L^{1}$-norm reconstruction error for $f_i$, $i=1,\dots,8$, on Delaunay triangulations $\mathcal{T}_n$, $n=5{:}5{:}25$. The comparison involves the standard histopolation (blue), the quadratic enriched method with Dirichlet weights (red), the volumetric enrichment with blended density $\Omega^{(\theta,\gamma)}$ (green), and the Beta edge enrichment strategy with parameters $(\zeta^{\star},\nu^{\star})$ (cyan). The bi-parametric tuning selects the optimal Dirichlet parameters $\alpha^{\star}=\beta^{\star}=2$, while the convex-blend tuning yields $\theta^{\star}=0.5$, $\gamma^{\star}=2$, and the edge tuning gives $\zeta^{\star}=2$, $\nu^{\star}=2$.}
      \label{fig:f12}
  \end{figure}

\noindent
Specifically, we compute the following errors in $L^1$-norm 
\begin{equation*}
    \varepsilon^{\mathrm{CH}}_1 = \left\|f-\pi^{\mathrm{CH}}_{1}[f]\right\|_{L^1},
\quad
\varepsilon^{\mathrm{fv}}_{2,\alpha^{\star}, \beta^{\star}} = \left\|f-\pi^{\mathrm{fv}}_{2,\alpha^\star,\beta^{\star}}[f]\right\|_{L^1}, 
\end{equation*}
\begin{equation*}
\varepsilon^{\mathrm{vol}}_{2,\theta^{\star},\gamma^{\star}} = \left\|f-\pi^{\mathrm{vol}}_{2,\theta^{\star},\gamma^{\star}}[f]\right\|_{L^1}, \quad \varepsilon^{\mathrm{ef}}_{2,\zeta^{\star},\nu^{\star}} = \left\|f-\pi^{\mathrm{ef}}_{2,\zeta^{\star},\nu^{\star}}[f]\right\|_{L^1}.
\end{equation*}
The results of this experiment are shown in Fig.~\ref{fig:f12}.
From this plot we observe the $L^1$ reconstruction errors for the four
projectors as the mesh is refined.
All methods improve as the mesh is refined, yet their accuracy levels
diverge progressively:
the volumetric projector 
$\pi^{\mathrm{vol}}_{2,\theta^\star,\gamma^\star}$
consistently delivers the smallest errors over all test functions,
the face-volume and edge-face projectors remain comparable to each other,
and all three enriched strategies increasingly outperform the classical
projector $\pi^{\mathrm{CH}}_{1}$ as the grid is refined.

\section{Conclusions and Future Works}\label{sec5}
We have presented three  3D weighted quadratic enrichment
strategies to enhance the accuracy of local histopolation on tetrahedral
meshes: $(i)$ a \emph{face-volume} enrichment coupling face and interior
weighted moments, $(ii)$ a \emph{purely volumetric} enrichment based solely on
interior quadratic moments, and $(iii)$ an \emph{edge-face} enrichment that
exploits Beta-type densities along edges and adjacent faces.
All three rely on weighted linear functionals defined through probability
density functions and orthogonal polynomials, extending the classical
histopolation framework while preserving its fundamental consistency.
A general unisolvence theorem guarantees well-posedness, and explicit
density families meeting its hypotheses have been identified.
Optimal parameter pairs $(\alpha^\star,\beta^\star)$,
$(\theta^\star,\gamma^\star)$, and $(\zeta^\star,\nu^\star)$ are found by a
global grid search, ensuring reproducible and problem-independent tuning.
Extensive numerical tests confirm that every enriched projector achieves
significantly higher accuracy than the classical linear histopolation:
the volumetric projector delivers the smallest overall errors, while the
face-volume and edge-face strategies exhibit comparable performance and
still markedly outperform the standard scheme. Future work will include adaptive mesh refinement and error estimation, the
integration of these enriched projectors into time-dependent PDE solvers, and
the extension of the approach to non-convex and curved polyhedral geometries.

\section*{Acknowledgments}
This research has been achieved as part of RITA \textquotedblleft Research
 ITalian network on Approximation'' and as part of the UMI group \enquote{Teoria dell'Approssimazione
 e Applicazioni}. The research was supported by GNCS–INdAM 2025 project \emph{``Polinomi, Splines e Funzioni Kernel: dall'Approssimazione Numerica al Software Open-Source''}. 
The work of F. Nudo is funded from the European Union – NextGenerationEU under the Italian National Recovery and Resilience Plan (PNRR), Mission 4, Component 2, Investment 1.2 \lq\lq Finanziamento di progetti presentati da giovani ricercatori\rq\rq,\ pursuant to MUR Decree No.~47/2025. 

\section*{Conflict of interest}
Not Applicable.

\bibliographystyle{spmpsci}
\bibliography{bibliografia}

\end{document}